\documentclass[11pt]{amsart}

\usepackage[T1]{fontenc}
\usepackage{lmodern}
\usepackage{microtype}
\usepackage{amsmath,amssymb,mathtools}
\usepackage{mathrsfs}
\usepackage{enumitem}
\usepackage[hidelinks]{hyperref}
\hypersetup{
  pdftitle={Bounded Representatives in Critical Sobolev--Hodge Spaces},
  pdfauthor={Xinan Dai, Wenhao Deng, Yingdong Shi, Tailin Wu, Yuchen Yang},
  pdfsubject={Critical Sobolev--Hodge selection and endpoint estimates},
  pdfkeywords={critical Sobolev spaces, differential forms, Hodge decomposition, Bourgain--Brezis estimates, Riesz potentials, Mazya Phi inequalities}
}

\allowdisplaybreaks
\numberwithin{equation}{section}

\newtheorem{theorem}{Theorem}[section]
\newtheorem{proposition}[theorem]{Proposition}
\newtheorem{lemma}[theorem]{Lemma}
\newtheorem{corollary}[theorem]{Corollary}

\theoremstyle{definition}

\theoremstyle{remark}

\newcommand{\R}{\mathbb R}

\newcommand{\Sph}{\mathbb S}
\newcommand{\cS}{\mathcal S}
\newcommand{\cL}{\mathcal L}
\newcommand{\cD}{\mathcal D}
\newcommand{\eps}{\varepsilon}
\newcommand{\ii}{\iota}
\newcommand{\Id}{\operatorname{Id}}
\newcommand{\supp}{\operatorname{supp}}
\newcommand{\dist}{\operatorname{dist}}

\newcommand{\norm}[1]{\lVert #1\rVert}

\newcommand{\pair}[2]{\langle #1,#2\rangle}

\title{Bounded Representatives in Critical Sobolev--Hodge Spaces}

\author{Xinan Dai}
\thanks{Xinan Dai is currently a Ph.D. student at Fudan University and a visiting
student at the AI for Scientific Simulation and Discovery Lab,
Westlake University.}
\address{College of Future Information and Technology\\
Fudan University, Shanghai, China}
\curraddr{Department of Artificial Intelligence\\
School of Engineering\\
Westlake University, Hangzhou, China}
\email{xndai23@m.fudan.edu.cn}

\author{Wenhao Deng}
\thanks{Wenhao Deng is a student at the University of Glasgow and is currently an intern at the AI for Scientific Simulation and Discovery Lab, Westlake University.}
\address{University of Glasgow, Glasgow, United Kingdom}
\curraddr{Department of Artificial Intelligence\\
School of Engineering\\
Westlake University, Hangzhou, China}
\email{dengwenhao@westlake.edu.cn}

\author{Yingdong Shi}
\address{School of Information Science and Technology, ShanghaiTech University, Shanghai, China}
\email{shiyd2023@shanghaitech.edu.cn}

\author{Tailin Wu}
\address{Department of Artificial Intelligence\\
School of Engineering\\
Westlake University, Hangzhou, China}
\email{wutailin@westlake.edu.cn}

\author{Yuchen Yang}
\address{Department of Artificial Intelligence\\
School of Engineering\\
Westlake University, Hangzhou, China}
\email{yangyuchen@westlake.edu.cn}

\date{August 8, 2026}
\subjclass[2020]{Primary 46E35; Secondary 42B20, 42B35, 58A10}
\keywords{critical Sobolev spaces, differential forms, Hodge decomposition, Bourgain--Brezis estimates, Riesz potentials, Maz'ya--Phi inequalities}

\begin{document}

\begin{abstract}
Let $n\ge2$, $1\le \ell\le n-1$, and $1<p<\infty$.  We prove that every
$v\in\dot W^{n/p,p}(\R^n;\Lambda^\ell)$ has a representative
$u\in\dot W^{n/p,p}\cap L^\infty$ with $du=dv$ and
\[
 \max\{\norm u_{\dot W^{n/p,p}},\norm u_{L^\infty}\}
 \lesssim \norm v_{\dot W^{n/p,p}}.
\]
Equivalently,
$d[\dot W^{n/p,p}\Lambda^\ell]
=d[(\dot W^{n/p,p}\cap L^\infty)\Lambda^\ell]$
with equivalent quotient norms.  The proof reduces the selection problem to
an endpoint graph estimate for a Riesz potential and the exact Hodge
projection.  Its main analytic input is a finite-dimensional-input
Maz'ya--$\Phi$ inequality for operator-valued homogeneous kernels.  For the
Riesz/Hodge pair, a nonlinear spherical profile built from the projected
kernel has exact atomic cancellation and is coercive by the identity
$\int_{\Sph^{n-1}}P(\theta)\,d\bar\sigma=(\ell/n)\Id$.  Frequency-localized
graph closure and Hahn--Banach duality then return a bounded representative.
\end{abstract}

\maketitle

\section{Introduction}

At the critical relation $sp=n$, the Sobolev embedding into $L^\infty$
fails.  Bourgain and Brezis showed that differential constraints can
nevertheless restore boundedness after one is allowed to change the
representative; for the first-order Hodge problem, see
\cite{BB03,BB04,BB07}.  They asked whether the same selection principle holds
throughout the critical scale \cite[p.~297, Open Problem~2]{BB07}.  Partial
higher-order and fractional results were obtained in \cite{BRWY19}.

We use homogeneous Bessel-potential spaces
$\dot W^{s,p}=\dot F^s_{p,2}$, $1<p<\infty$; see
\cite[Chapter~2]{Triebel83}.  Thus the question is not whether every critical
Sobolev form is bounded, but whether the affine class
$\{u:du=dv\}$ always contains a bounded representative with quantitative
control.

\begin{theorem}[Critical Sobolev--Hodge selection]\label{thm:main}
Let $n\ge2$, $1\le \ell\le n-1$, and $1<p<\infty$.  Then
\begin{equation}\label{eq:main-image}
 d\bigl[\dot W^{n/p,p}(\R^n;\Lambda^\ell)\bigr]
 =d\bigl[(\dot W^{n/p,p}\cap L^\infty)(\R^n;\Lambda^\ell)\bigr]
\end{equation}
with equivalent induced quotient norms.  More precisely, there is
$C=C(n,\ell,p)$ such that every
$v\in\dot W^{n/p,p}(\R^n;\Lambda^\ell)$ admits
$u\in(\dot W^{n/p,p}\cap L^\infty)(\R^n;\Lambda^\ell)$ satisfying
\begin{equation}\label{eq:main-estimate}
 du=dv,\qquad
 \max\bigl\{\norm{u}_{\dot W^{n/p,p}},\norm{u}_{L^\infty}\bigr\}
 \le C\norm{v}_{\dot W^{n/p,p}}.
\end{equation}
\end{theorem}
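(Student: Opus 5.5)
The plan is to follow the Bourgain--Brezis duality scheme, as the abstract indicates. First we reduce the selection problem to an a priori estimate for the dual problem. Consider the closed subspace $E=d[\dot W^{n/p,p}\Lambda^\ell]$ of $\dot W^{n/p-1,p}\Lambda^{\ell+1}$. The claim \eqref{eq:main-estimate} asks that the map
\[
T:(\dot W^{n/p,p}\cap L^\infty)\Lambda^\ell\to E,\qquad T=d,
\]
be onto with a quantitative bound. By the closed range theorem, or directly by Hahn--Banach plus a standard open-mapping/approximation argument, this is equivalent to an injectivity estimate for the adjoint. Every functional on $E$ is represented by some $g\in\dot W^{-n/p,p'}\Lambda^{\ell+1}$, modulo annihilators. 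Surjectivity then follows from
\[
\norm{d^*g}_{\dot W^{-n/p,p'}+L^1}\gtrsim \norm{g}_{E^*},
\]
i.e.\ the quotient norm of $d^*g$, and this is what we aim to prove. Writing $g$ through the Hodge decomposition, we only need the coexact part. Replacing $g$ by $d^*\Delta^{-1}$-type potentials, the estimate becomes an endpoint graph estimate: for $f=d^*g$ with $d^*f=0$, the norm of $f$ in $\dot W^{-n/p,p'}$ is controlled by its norm in $L^1+\dot W^{-n/p,p'}$.

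Concretely, suppose $f=f_1+f_2$ with $f_1\in L^1$, $f_2\in \dot W^{-n/p,p'}$ and $d^*f=0$. Then $f=Pf_1+Pf_2$, where $P$ is the exact Hodge projection onto coclosed forms. It remains to show
\[
\norm{Pf_1}_{\dot W^{-n/p,p'}}\lesssim \norm{f_1}_{L^1}.
\]
This is a Bourgain--Brezis/Van Schaftingen type bound, but only for the combination $P$ composed with the Riesz potential $I_{n/p}$. We first reduce to $f_1$ being a finite sum of point masses, or smooth compactly supported forms, by density and frequency localization. The graph closure is done by Littlewood--Paley truncations, so that we work with smooth data and pass to the limit, which is how the paper's ``frequency-localized graph closure'' enters.

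The main obstacle is the $L^1$ estimate itself. An $L^1$ input is not mapped into $\dot W^{-n/p,p'}$ by $P$ in general; only the cancellation hidden in the projection, combined with duality against test forms, rescues it. My plan is to apply the Maz'ya--$\Phi$ inequality for operator-valued homogeneous kernels assumed earlier. The kernel of $I_{n/p}P$ is homogeneous of degree $-n+n/p$ with values in $\mathcal L(\Lambda^{\ell+1})$, and its input space is finite-dimensional. For atoms $\delta_x\otimes a$ we build a spherical profile $\Phi(\theta)$ from $P(\theta)$ so that the atomic pieces cancel exactly. Coercivity then reduces to nondegeneracy of the average of $P$ over the sphere, which is the identity $\int_{\Sph^{n-1}}P(\theta)\,d\bar\sigma=(\ell/n)\Id$. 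Since $1\le\ell\le n-1$, this average is a nonzero multiple of the identity; this is exactly where $\ell\ne0,n$ is used. Verifying this identity by symmetry (averaging $\theta\wedge\iota_\theta$ over the sphere) is routine.

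The delicate point is showing that the nonlinear profile satisfies the hypotheses of the $\Phi$-inequality uniformly. Once the graph estimate holds, Hahn--Banach yields, for each $v$, a functional realization giving $u$ with $du=dv$ and both norms bounded by $C\norm v_{\dot W^{n/p,p}}$. The reverse inclusion in \eqref{eq:main-image} is trivial, so the two quotient norms are equivalent.
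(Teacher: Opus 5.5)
Your duality set-up is the paper's: for coclosed $f\in\dot W^{-n/p,p'}+L^1_0$ you need $\norm{f}_{\dot W^{-n/p,p'}}\lesssim\norm{f}_{\dot W^{-n/p,p'}+L^1_0}$, and Hahn--Banach then produces $u$. The gap is the step ``it remains to show $\norm{Pf_1}_{\dot W^{-n/p,p'}}\lesssim\norm{f_1}_{L^1}$'', where $P$ is the projection onto coclosed forms. This is a linear bound for arbitrary $f_1\in L^1$, and it is false.

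Put $\alpha=n/p$ and $r=p'$. The kernel of $I_\alpha P$ is $|x|^{\alpha-n}K^\perp(x/|x|)$ with $K^\perp\not\equiv0$; otherwise $P=0$, which is impossible for $\ell\le n-1$. Moreover $r(\alpha-n)=-n$. Take $f_1=\varphi_\eps a-\varphi_\eps(\cdot-e)a$, with $\varphi_\eps$ an approximate identity and $a$ chosen so that $K^\perp(\cdot)a\not\equiv0$. Then $\norm{f_1}_{L^1}\le2|a|$, while $\norm{I_\alpha Pf_1}_{L^r}\gtrsim(\log1/\eps)^{1/r}$.

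Dually, your bound would say $\norm{Pv}_{L^\infty/\R}\lesssim\norm{v}_{\dot W^{n/p,p}}$. That is, $v\mapsto Pv$ would be a bounded \emph{linear} selection, and no nonlinear argument would be needed. The $\Phi$-inequality cannot deliver such a bound either. A profile attached to the single kernel of $I_\alpha P$ with exact atomic cancellation $\int_{\Sph^{n-1}}\Phi(K^\perp(\theta)a)\,d\bar\sigma=0$ cannot satisfy $\Phi(z)\gtrsim|z|^r$, so it gives no $L^r$ control.

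The missing idea is to keep the constraint that couples $f_1$ and $f_2$. Since $d^*f=0$, the exact component of $f_1$ is already controlled: $(\Id-P)f_1=-(\Id-P)f_2$, with $\dot W^{-n/p,p'}$ norm $\lesssim\norm{f_2}_{\dot W^{-n/p,p'}}$. What is true and sufficient is the graph estimate
\[
 \norm{I_\alpha f_1}_{L^r}\lesssim\norm{f_1}_{L^1}+\norm{(\Id-P)I_\alpha f_1}_{L^r},
 \qquad f_1\in L^1_0.
\]
It recovers the \emph{full} potential of $f_1$ from its exact part, so it puts $f_1$, and hence $f$, in $\dot W^{-n/p,p'}$ with the right bound.

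To prove it, the paper applies the operator-input $\Phi$-inequality to the paired kernel $a\mapsto(G_\alpha a,H_\alpha a)$. Here $G_\alpha=c_{n,\alpha}|x|^{\alpha-n}$ is the Riesz kernel, and $H_\alpha(x)=|x|^{\alpha-n}C_\alpha(x/|x|)$ is the kernel of $(\Id-P)I_\alpha$. The profile is $\Phi(u,v)=\mathcal A(u)-|v|^r$ with $\mathcal A(a)=\int_{\Sph^{n-1}}|C_\alpha(\theta)c_{n,\alpha}^{-1}a|^r\,d\bar\sigma$. Cancellation holds by construction. Coercivity of $\mathcal A$ is where the average $(\ell/n)\Id$ of the \emph{exact} symbol $\theta\wedge\ii_\theta$ enters, so the relevant condition is $\ell\ge1$. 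Note that the symbol you averaged is not that of the coclosed projection you called $P$.

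Two further points you pass over also need care. First, the sum space must use $L^1_0$, whose dual is $L^\infty/\R$; this is consistent because $\dot W^{-n/p,p'}\cap L^1$ is automatically mean zero. Second, extending the estimate from smooth data to the whole graph domain cannot rely on $I_\alpha:L^1\to L^r$, which is false.
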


No bounded linear selection operator is asserted; the construction is
inherently nonlinear, as in the original Bourgain--Brezis phenomenon
\cite{BB02,BB03}.

Set
\begin{equation}\label{eq:intro-parameters}
 r=p'=\frac{p}{p-1},\qquad
 \alpha=\frac np=n\Bigl(1-\frac1r\Bigr),\qquad
 \beta=n-\alpha=\frac nr,
\end{equation}
and let $I_\alpha=|D|^{-\alpha}$.  If $P$ denotes the exact Hodge projection
on $\ell$-forms, the proof turns on the graph estimate
\begin{equation}\label{eq:intro-graph}
 \norm{I_\alpha\omega}_{L^r}
 \lesssim \norm{\omega}_{L^1}+\norm{P I_\alpha\omega}_{L^r},
 \qquad \omega\in L^1_0(\R^n;\Lambda^\ell).
\end{equation}
Only the projected term is assumed finite.  Thus the estimate recovers the
full critical potential from one Hodge component and the $L^1$ size of the
input.

There are three substantive steps.  First, Section~\ref{sec:mazya} proves an
operator-input form of Stolyarov's Maz'ya--$\Phi$ inequality.  The nontrivial point is multiscale: vector
masses $\int_Q f$ enter the exact atomic cancellation, while all error terms
are organized by the positive scalar measure $|f|\,dx$.  Second,
Section~\ref{sec:profile} specializes this theorem to the paired Riesz/Hodge
kernel.  The profile
\[
 \mathcal A(a)=\int_{\Sph^{n-1}}
 \bigl|C_\alpha(\theta)c_{n,\alpha}^{-1}a\bigr|^r\,d\bar\sigma(\theta)
\]
has exact cancellation by construction and is coercive because
\begin{equation}\label{eq:intro-averageP}
 \int_{\Sph^{n-1}}P(\theta)\,d\bar\sigma(\theta)
 =\frac{\ell}{n}\Id_{\Lambda^\ell\R^n}.
\end{equation}
Third, Section~\ref{sec:closure} closes the smooth estimate on the endpoint
graph domain without using the false strong bound
$I_\alpha:L^1\to L^r$; Section~\ref{sec:duality} then applies Hahn--Banach
duality to obtain Theorem~\ref{thm:main}.

For background on endpoint differential constraints and canceling operators,
see \cite{BVS14,VanSchaftingen13,VanSchaftingen14,Spector20}.  Standard
Fourier multiplier facts are taken from \cite{Grafakos14,Stein70}.

\section{Hodge notation and critical spaces}\label{sec:notation}

Let $V=\Lambda^\ell\R^n$ with its Euclidean inner product.  For
$\xi\in\R^n$ write
\[
 \eps_\xi a=\xi\wedge a,\qquad
 \ii_\xi a=\text{contraction of $a$ by $\xi$}.
\]
The Cartan identities give
\begin{equation}\label{eq:cartan}
 \eps_\xi^*=\ii_\xi,\qquad
 \ii_j\eps_i+\eps_i\ii_j=\delta_{ij}\Id,\qquad
 \ii_\xi\eps_\xi+\eps_\xi\ii_\xi=|\xi|^2\Id.
\end{equation}
For $\xi\ne0$ set
\begin{equation}\label{eq:hodge-symbols}
 P(\xi)=\frac{\eps_\xi\ii_\xi}{|\xi|^2},\qquad
 P^\perp(\xi)=\frac{\ii_\xi\eps_\xi}{|\xi|^2}.
\end{equation}
Then $P,P^\perp$ are complementary self-adjoint projections.  With
$D=-i\nabla$, the Fourier symbols of $d,d^*$ are $i\eps_\xi,-i\ii_\xi$, so
\begin{equation}\label{eq:hodge-multiplier}
 P=dd^*|D|^{-2},\qquad P^\perp=d^*d|D|^{-2}.
\end{equation}

We use the standard homogeneous realization on
$\cS_\infty'$, where $\cS_\infty$ is the Schwartz space with all moments
zero; equivalently one may work with tempered distributions modulo
polynomials.  Let $\cS_0$ be the Schwartz functions whose Fourier transforms
vanish near the origin.  For $s\in\R$ and $1<q<\infty$,
\[
 \norm f_{\dot W^{s,q}}=\norm{|D|^sf}_{L^q},
\]
with $\dot W^{s,q}$ obtained by completing $\cS_0$ and then embedded in
$\cS_\infty'$.  All vector-valued norms use the Euclidean coefficient norm.

A homogeneous class is in $L^\infty$ if it has a bounded representative.
The natural norm is therefore
\begin{equation}\label{eq:Linfty-mod-constants}
 \norm f_{L^\infty/\R}=\inf_{c\in\R}\norm{f+c}_{L^\infty},
\end{equation}
and analogously for $V$-valued forms.  A representative may be chosen with
ordinary $L^\infty$ norm at most twice this quotient norm; constants do not
affect $d$.

Finally,
\begin{equation}\label{eq:L10}
 L^1_0(\R^n)=\Bigl\{f\in L^1(\R^n):\int f=0\Bigr\},
\end{equation}
coefficientwise for forms.  We use the usual intersection and infimum-sum
norms.  The image $d[Z\Lambda^\ell]$ carries the quotient norm
\begin{equation}\label{eq:image-quotient}
 \norm{dv}_{d[Z\Lambda^\ell]}
 =\inf\{\norm{u}_{Z\Lambda^\ell}:du=dv\}.
\end{equation}

\section{Reduction to the graph estimate}\label{sec:graph}

Fix \eqref{eq:intro-parameters} and set
\begin{equation}\label{eq:XY}
 Y=\dot W^{\alpha,p}(\R^n),\qquad
 X=\dot W^{-\alpha,r}(\R^n).
\end{equation}
These spaces are reflexive,
$Y'=X$, $X'=Y$ under the pairing on $\cS_0$, and
\begin{equation}\label{eq:X-Riesz}
 \norm{\omega}_X=\norm{I_\alpha\omega}_{L^r}
\end{equation}
up to the normalization of the Riesz kernel.  The analytic part of the proof
is therefore the estimate
\begin{equation}\label{eq:graph}
 \norm{I_\alpha\omega}_{L^r}
 \lesssim \norm{\omega}_{L^1}+\norm{P I_\alpha\omega}_{L^r},
 \qquad
 \omega\in L^1_0(\R^n;V),
\end{equation}
whenever the projected term is finite.  Sections~\ref{sec:mazya}--\ref{sec:closure}
prove \eqref{eq:graph}; Section~\ref{sec:duality} converts it back to bounded
selection.

\section{The operator-input Maz'ya--\texorpdfstring{$\Phi$}{Phi} estimate}\label{sec:mazya}

We need a finite-dimensional-input form of Stolyarov's scalar-input
Maz'ya--$\Phi$ theorem \cite[Theorem~2.1]{Stolyarov24}.  The input will be
vector-valued and the homogeneous kernel operator-valued.  We isolate exactly
the points where scalarity could matter; the remaining dyadic estimates are
used only as statements about the positive measure $|f|_E\,dx$.

\subsection{Statement and elementary inequalities}
Let $E$ and $F$ be finite-dimensional real normed spaces.  The space
$\cL(E,F)$ is equipped with the operator norm.

\begin{theorem}[Operator-input Maz'ya--$\Phi$ inequality]\label{thm:operator-phi}
Let $0<\alpha<n$ and
\[
 r=\frac{n}{n-\alpha},\qquad \beta=n-\alpha=\frac nr.
\]
Suppose
\begin{equation}\label{eq:op-kernel}
 K(x)=|x|^{\alpha-n}\widetilde K(x/|x|),
 \qquad
 \widetilde K\in \operatorname{Lip}(\Sph^{n-1};\cL(E,F)).
\end{equation}
Let $\Phi:F\to\R$ be locally Lipschitz and positively $r$-homogeneous.  Assume
that, for every $a\in E$,
\begin{equation}\label{eq:op-cancellation}
 \int_{\Sph^{n-1}}\Phi(\widetilde K(\theta)a)\,d\bar\sigma(\theta)=0,
 \qquad
 \int_{\Sph^{n-1}}\Phi(-\widetilde K(\theta)a)\,d\bar\sigma(\theta)=0.
\end{equation}
Then every $f\in C_c^\infty(\R^n;E)$ satisfying $\int_{\R^n}f=0$ obeys
\begin{equation}\label{eq:op-phi-estimate}
 \left|\int_{\R^n}\Phi(K*f)(x)\,dx\right|
 \le C\norm{f}_{L^1(E)}^r.
\end{equation}
The constant depends only on $n,\alpha$, the fixed norms on $E,F$,
$\norm{\widetilde K}_{\operatorname{Lip}}$, and the Lipschitz norm of $\Phi$
on one fixed annulus.
\end{theorem}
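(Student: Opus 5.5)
We follow Stolyarov's dyadic scheme for the scalar-input Maz'ya--$\Phi$ theorem, checking at each step that only the positive measure $\mu=|f|_E\,dx$ enters the error terms and that vector masses $\int_Q f\in E$ appear only through the exact cancellation \eqref{eq:op-cancellation}. Both sides of \eqref{eq:op-phi-estimate} scale the same way under dilations, and the left side is $r$-homogeneous in $f$. Hence we may normalize $\norm f_{L^1(E)}=1$. The elementary inequalities we need are:
\begin{itemize}
\item the bound $|\Phi(y+z)-\Phi(y)|\lesssim |z|(|y|+|z|)^{r-1}$, which follows from local Lipschitz continuity on one annulus together with $r$-homogeneity;
\item the kernel bounds $|K(x)|\lesssim|x|^{\alpha-n}$ and $|K(x-y)-K(x)|\lesssim |y|\,|x|^{\alpha-n-1}$ for $|y|\le|x|/2$, which follow from the Lipschitz property of $\widetilde K$;
\item the observation that $\Phi(K(x)a)$ is integrable at infinity with integral zero over any annulus centered at the origin. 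Indeed, $\Phi(K(x)a)=|x|^{(\alpha-n)r}\Phi(\widetilde K(x/|x|)a)=|x|^{-n}\Phi(\widetilde K(\theta)a)$ since $(n-\alpha)r=n$, so this is a consequence of \eqref{eq:op-cancellation}.
\end{itemize}
The condition with $-a$ is needed because the masses $\int_Q f$ may point in any direction of $E$, and $\Phi$ is only positively homogeneous. Cancellation for $a$ and $-a$ gives cancellation for every ray in $E$. For each fixed $a$ the scalar function $\theta\mapsto\Phi(\widetilde K(\theta)a)$ is exactly what appears in the scalar theorem.

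Next we carry out a Calder\'on--Zygmund/atomic decomposition adapted to $\mu$. Build a dyadic tree of cubes, or a stopping-time family, and write $f=\sum_Q f_Q$, where each $f_Q$ is supported in a neighbour of $Q$ and has $|f_Q|_{L^1}\lesssim\mu(\widetilde Q)$. The differences of $f_Q$ from $a_Q\delta_{c_Q}$-type pieces, with $a_Q=\int f_Q\in E$, have vanishing mass. We then expand $\int\Phi(K*f)$ scale by scale. On the region at distance $\sim 2^k\ell(Q)$ from a cube $Q$, replace $K*f$ by the field $K(x-c_Q)a_Q$ produced by the point mass at the center. The main term $\int_{\text{annulus}}\Phi(K(x-c_Q)a_Q)\,dx$ vanishes by the observation above. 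The errors are controlled by the Lipschitz bound on $\Phi$ and the kernel smoothness. They have the form $\mu(Q)\cdot(\text{local maximal/potential quantity of }\mu)^{r-1}$ times geometric factors $2^{-k}$. Summing, they are bounded by $\int (M_\beta\mu)^{r-1}\,d\mu$-type expressions, or by the Wolff potential $\sum_Q \mu(Q)^r\ell(Q)^{-n}\cdot(\dots)$. The identity $\int_{\R^n}f=0$ kills the term from the top cube, which would otherwise produce the non-integrable tail $|x|^{-n}$. All these quantities are defined for the scalar positive measure $\mu$ alone.

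The final step is to bound the resulting scalar expression by $\mu(\R^n)^r$. This is exactly the part of Stolyarov's proof that is a statement about positive measures, so we quote it verbatim: the relevant sums are of the form $\sum_Q \mu(Q)\,\mu(Q)^{r-1}\ell(Q)^{-\beta(r-1)}\cdot(\ell(Q)^{\beta(r-1)}\dots)$ and telescope via the stopping-time selection. The main obstacle is the second step: making the replacement of $K*f$ by the single-cube field rigorous while only vector masses enter. The danger is cross terms between different cubes at comparable scales, where $K*f$ is a sum of many vector fields. Linearizing $\Phi$ around a scalar multiple is impossible here, since the sum of the fields need not be parallel to any single one. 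We plan to handle this as follows. Group the contributions by the stopping cube containing $x$ at the relevant scale. Apply the cancellation only to the dominant local mass $a_Q$. Treat all other contributions as perturbations $z$ in the Lipschitz bound on $\Phi$, measured by $|z|\le K_{|\cdot|}*\mu$. This is what the operator-norm Lipschitz constant of $\widetilde K$ and the finite-dimensionality of $E$ (equivalence of norms, compactness of the unit sphere for uniformity of constants) are used for.
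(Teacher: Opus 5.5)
Your outline has a genuine gap, and it sits at the step you flag as the main obstacle: nothing in it makes the error terms summable.

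Suppose the contributions of the other cubes are treated as a perturbation $z$ known only to satisfy $|z|\le|K|*|f|_E$. Then, wherever they are not small relative to the dominant field, the Lipschitz remainder $|z|(|y|+|z|)^{r-1}$ can only be majorized by $(|K|*|f|_E)^r$. This is not controlled by $\norm{f}_{L^1}^r$ even locally: if $f$ is concentrated at scale $\eps$, then $\int_{B_2}(|K|*|f|_E)^r$ is of order $\log(1/\eps)\norm{f}_{L^1}^r$, since $I_\alpha$ is only of weak type $(1,r)$. Showing that configurations with comparable competing sources carry total weight $\lesssim\norm{f}_{L^1}^r$ is the actual content of the theorem, and the proposal does not supply it.

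The main-term replacement has the same defect. Expanding $f_Q$ about the cube center $c_Q$ costs the crude first moment $\ell(Q)\mu(Q)$, hence about $2^{-k}\mu(Q)^r$ on the annulus at distance $2^k\ell(Q)$. Summing in $k$ leaves $\sum_Q\mu(Q)^r$ over a dyadic tree. For a smoothed dipole at scale $\eps$, each of the roughly $\log_2(1/\eps)$ generations above scale $\eps$ contributes about $\norm{f}_{L^1}^r$.

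The scalar expressions you plan to close with cannot be quoted from Stolyarov either. The quantity $\int(M_\beta\mu)^{r-1}\,d\mu$ is not invariant under the $L^1$-preserving dilations $f\mapsto\lambda^nf(\lambda\,\cdot)$, and sums of $\mu(Q)^r$ diverge for concentrated measures. Non-parallelism, on the other hand, is not the real difficulty: $|\Phi(z+w)-\Phi(z)-\Phi(w)|\lesssim\Gamma_r(|z|,|w|)$ holds for arbitrary $z,w\in F$, and non-parallel $F$-valued sums already occur in the scalar-input theorem.

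The missing idea is bookkeeping by the dyadic energy $\mathcal E_{Q,h}=\sum_{S\in\cD_h(Q)}m_S(f)^r$, which is nonincreasing in $h$ and at most $m_Q(f)^r$. Every error must be charged to an energy drop, and the drops telescope. The paper splits $K$ into sharp annuli $K_j$ and telescopes in scale, $K_{\le J}=K_{\le0}+\sum_{0\le j<J}K_{j+1}$, which leaves three pieces.

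\emph{Single annuli.} The exact atomic cancellation $\int\Phi(K_j(x-y)a)\,dx=0$ for $a\in E$ is the one genuinely vector-valued input, and you did identify it. It is paired with the first moment about the optimal center, $\inf_c\int_Q|x-c|\,|f(x)|_E\,dx$. These terms are localized by Stolyarov's Theorem~3.1, which is what disposes of same-scale interactions between different cubes. They are then summed over all scales through his Lemma~4.2, which bounds $m_Q^{r-1}\ell(Q)^{-1}\inf_c\int_Q|x-c|\,|f|_E$ by $\sum_h\rho^h(\mathcal E_{Q,h}-\mathcal E_{Q,h+1})$.

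\emph{Mixed scales.} One bounds $\int\Gamma_r(|K_{\le j}*f|,|K_{j+1}*f|)$. Close scales reduce again to first moments, because the atomic fields live on disjoint annuli. Separated scales $m<j-N$ gain $2^{-\gamma(j-m)}$ and leave $\Gamma_r(A_Q,B_Q)$ for the masses of two separated source regions. This is dominated by an energy drop: via Stolyarov's Lemma~6.7 for $r\le2$, and via the convexity bound $a^{r-1}b+ab^{r-1}\lesssim(a+b)^r-a^r-b^r$ for $r>2$. The sum over coarse scales is closed by subadditivity up to constants of $s\mapsto\min\{s^{r-1}t,st^{r-1}\}$ for $r\le2$, and by weighted H\"older for $r>2$.

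\emph{Coarse block.} The condition $\int f=0$ turns $K_{\le0}*f$ into an $L^r$ tail of order $|x|^{\alpha-n-1}$.

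Without this energy-drop mechanism, or an equivalent one, your scheme does not produce a bound by $\norm{f}_{L^1}^r$. A minor point: $\Phi(K(x)a)=|x|^{-n}\Phi(\widetilde K(\theta)a)$ is not integrable at infinity; only its integrals over centered annuli vanish.
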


When $E=\R$, this is Stolyarov's theorem.  In the present formulation the
second condition in \eqref{eq:op-cancellation} follows from the first by
replacing $a$ with $-a$; we retain both conditions to keep the comparison with
the scalar theorem transparent.

The following consequences of homogeneity will be used repeatedly.  There is
a constant $C_\Phi$ such that
\begin{align}
 |\Phi(z)|&\le C_\Phi |z|^r,                                      \label{eq:Phi-growth}\\
 |\Phi(z+h)-\Phi(z)|
 &\le C_\Phi |h|(|z|+|h|)^{r-1}.                                 \label{eq:Phi-increment}
\end{align}
To justify the second estimate uniformly near the origin, put
$R=|z|+|h|$.  The case $R=0$ is trivial.  Positive $r$-homogeneity reduces the
claim to points in the closed unit ball.  Lipschitz control on a fixed annulus,
together with the radial identity
$\Phi(t\zeta)=t^r\Phi(\zeta)$ and $r>1$, gives a Lipschitz bound on that
ball with a constant depending only on the annular Lipschitz data.  Scaling
back by $R^r$ yields \eqref{eq:Phi-increment}.  We shall also need a symmetric
second difference.  Set
\begin{equation}\label{eq:Gamma-r}
 \Gamma_r(s,t)=
 \begin{cases}
  M_r(s,t):=\min\{s^{r-1}t,\,st^{r-1}\},&1<r\le2,\\[2mm]
  B_r(s,t):=s^{r-1}t+st^{r-1},&r>2.
 \end{cases}
\end{equation}
The function $\Gamma_r$ is locally Lipschitz on $[0,\infty)^2$: for
$1<r\le2$ this is \cite[Lemma~6.8]{Stolyarov24}, while for $r>2$ it is
immediate from the definition.
Then
\begin{equation}\label{eq:Phi-second-difference}
 |\Phi(z+w)-\Phi(z)-\Phi(w)|
 \lesssim \Gamma_r(|z|,|w|).
\end{equation}
To see this, suppose first that $|z|\ge|w|$.  By
\eqref{eq:Phi-increment} and \eqref{eq:Phi-growth}, the left side is at most
$C|z|^{r-1}|w|$.  For $1<r\le2$ this is exactly the smaller of the two mixed
monomials in \eqref{eq:Gamma-r}; for $r>2$ it is one of the two summands of
$B_r$.  The case $|w|\ge|z|$ is symmetric.

For later use we recall two elementary properties.  If $1<r\le2$, then
\begin{equation}\label{eq:Mr-subadditive}
 M_r\Bigl(\sum_k s_k,t\Bigr)\lesssim\sum_k M_r(s_k,t),
 \qquad s_k,t\ge0.
\end{equation}
For $1<r<2$ this is \cite[Lemma~6.9]{Stolyarov24}, while $r=2$ is immediate; it follows from
$M_r(s,t)=t^r\min\{s/t,(s/t)^{r-1}\}$ and the subadditivity, up to a constant,
of $u\mapsto\min\{u,u^{r-1}\}$.  If $r>2$, then for every $\tau>1$ and every
nonnegative sequence $(s_k)$,
\begin{equation}\label{eq:weighted-holder}
 \left(\sum_{k\ge0}s_k\right)^{r-1}
 \le C_{r,\tau}\sum_{k\ge0}\tau^k s_k^{r-1}.
\end{equation}
This is weighted H\"older: write
$s_k=\tau^{-k/(r-1)}(\tau^{k/(r-1)}s_k)$ and use the convergence of
$\sum_k\tau^{-k/(r-2)}$.  This is the weighted summation needed when $r>2$; compare
\cite[Section~5]{Stolyarov24}.

\subsection{Sharp annuli and atomic cancellation}
Split $K$ into the sharp annular pieces
\begin{equation}\label{eq:annular-kernel}
 K_j(x)=K(x)\mathbf 1_{\{2^{-j-1}<|x|\le2^{-j}\}},
 \qquad K=\sum_{j\in\mathbb Z}K_j
\end{equation}
almost everywhere, and write $K_{a\le b}=\sum_{a\le j\le b}K_j$ and
$K_{\le b}=\sum_{j\le b}K_j$.  The sharp decomposition is useful because
annuli with disjoint radial ranges have disjoint supports.  Since it introduces
jumps at the boundary spheres, the correct regularity estimate is an
$L^1$-translation estimate rather than a global pointwise derivative bound.

\begin{lemma}[Annular operator bounds]\label{lem:annular-translation}
For every $j\in\mathbb Z$ and $h\in\R^n$,
\begin{align}
 \norm{K_j}_{L^\infty(\cL(E,F))}&\lesssim2^{j\beta},               \label{eq:annular-Linfty}\\
 \norm{K_j}_{L^1(\cL(E,F))}&\lesssim2^{-j\alpha},                 \label{eq:annular-L1}\\
 \int_{\R^n}\norm{K_j(x-h)-K_j(x)}_{\cL(E,F)}\,dx
 &\lesssim \min\{2^{-j\alpha},\,2^{j(1-\alpha)}|h|\}.           \label{eq:annular-translation}
\end{align}
The same translation estimate, with a constant depending on a fixed integer
$N$, holds for a block $K_{j-N\le j}$.
\end{lemma}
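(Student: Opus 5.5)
The first two bounds follow directly from homogeneity on the annulus
$A_j=\{2^{-j-1}<|x|\le2^{-j}\}$.  On $A_j$ we have
\[
 \norm{K_j(x)}_{\cL(E,F)}\le\norm{\widetilde K}_{L^\infty}|x|^{\alpha-n}\le 2^{(j+1)(n-\alpha)}\norm{\widetilde K}_{L^\infty},
\]
which is \eqref{eq:annular-Linfty} since $\beta=n-\alpha$.  Integrating over $A_j$, whose volume is $\simeq 2^{-jn}$, gives $2^{j\beta}2^{-jn}=2^{-j\alpha}$, which is \eqref{eq:annular-L1}.  The first branch of the minimum in \eqref{eq:annular-translation} then follows from the triangle inequality and translation invariance of the $L^1$ norm: the left side is at most $2\norm{K_j}_{L^1}$.

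For the second branch I would use scaling.  We have $K_j(x)=2^{j(n-\alpha)}K_0(2^jx)$, so after the change of variables $y=2^jx$ the left side equals
\[
 2^{-j\alpha}\int\norm{K_0(y-2^jh)-K_0(y)}\,dy .
\]
It therefore suffices to show
\[
 \int\norm{K_0(y-k)-K_0(y)}\,dy\lesssim|k|
\]
for all $k$; this gives $2^{-j\alpha}2^j|h|=2^{j(1-\alpha)}|h|$.  For $|k|\ge1/8$ the bound is trivial from $\norm{K_0}_{L^1}\lesssim1$.  For $|k|<1/8$ I would split $\R^n$ into two regions.

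\emph{The boundary layers.}  Let $S_k$ be the set of $y$ within distance $|k|$ of one of the spheres $|y|=1/2$ or $|y|=1$.  Its measure is $\lesssim|k|$, and the integrand is bounded there by $2\norm{K_0}_{L^\infty}\lesssim1$.  This region therefore contributes $\lesssim|k|$.

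\emph{The complement.}  Outside $S_k$, either $y$ and $y-k$ both lie outside the closed annulus, in which case the integrand vanishes, or the whole segment from $y-k$ to $y$ lies in $\{1/2<|z|<1\}$.  In the latter case $K$ is Lipschitz along the segment: $z\mapsto|z|^{\alpha-n}$ is smooth there, and $z\mapsto z/|z|$ is Lipschitz there with values in the sphere, on which $\widetilde K$ is Lipschitz.  This gives
\[
 \norm{K(y-k)-K(y)}\lesssim\bigl(\norm{\widetilde K}_{L^\infty}+\norm{\widetilde K}_{\operatorname{Lip}}\bigr)|k|,
\]
and integrating over the bounded annulus yields $\lesssim|k|$.

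For the block $K_{j-N\le j}=\sum_{i=j-N}^{j}K_i$, I would apply the triangle inequality and sum the $N+1$ single-annulus estimates.  Each term is bounded by
\[
 \min\{2^{-i\alpha},2^{i(1-\alpha)}|h|\}\le C_N\min\{2^{-j\alpha},2^{j(1-\alpha)}|h|\}
\]
for $j-N\le i\le j$, since the ratios $2^{(j-i)\alpha}$ and $2^{(i-j)(1-\alpha)}$ are bounded by $2^{N\max(\alpha,|1-\alpha|)}$.  This gives the block estimate with constant depending on $N$.

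The only delicate point is the jump discontinuity of the sharp cutoff.  I would handle it by the measure bound on the boundary layer, which is why the regularity estimate is an $L^1$-translation estimate rather than a pointwise derivative bound.  I expect no other obstacle.
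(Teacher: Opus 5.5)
Your proposal is correct and is essentially the paper's argument. The $L^1$ bound handles large translations; for small translations, the Lipschitz regularity of the homogeneous extension on the common part of the annuli, together with a measure bound of order $|h|$ on the boundary layer, gives the second branch; and the block estimate follows by summing finitely many annuli. The only cosmetic difference is that you first rescale to $j=0$ via $K_j(x)=2^{j\beta}K_0(2^jx)$, whereas the paper tracks the powers of $2^j$ directly.
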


\begin{proof}
The first two estimates follow from homogeneity and the volume of the annulus.
Assume first that $|h|\le c2^{-j}$.  On the common part of the original and
translated annuli, homogeneity together with the Lipschitz regularity of
$\widetilde K$ makes the homogeneous extension Lipschitz with constant
$O(2^{j(\beta+1)})$ in operator norm.  Integrating this difference estimate
gives $O(2^{j(1-\alpha)}|h|)$.  The symmetric difference of the two annuli has
measure $O(2^{-j(n-1)}|h|)$, and multiplication by the size
$O(2^{j\beta})$ gives the same bound.  If $|h|>c2^{-j}$, the triangle
inequality and \eqref{eq:annular-L1} give $O(2^{-j\alpha})$.  A fixed finite
block is obtained by summing the corresponding estimates.
\end{proof}

The cancellation of an annular atom is exact.

\begin{lemma}[Operator-valued annular atoms]\label{lem:annular-atoms}
For every $a\in E$, $y\in\R^n$, and $j\in\mathbb Z$,
\begin{equation}\label{eq:annular-atom}
 \int_{\R^n}\Phi(K_j(x-y)a)\,dx=0.
\end{equation}
\end{lemma}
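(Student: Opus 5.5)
The identity \eqref{eq:annular-atom} follows from translation invariance, polar coordinates, homogeneity, and the cancellation hypothesis \eqref{eq:op-cancellation}. First I would substitute $x\mapsto x+y$. Lebesgue measure is translation invariant, so the integral equals $\int_{\R^n}\Phi(K_j(x)a)\,dx$, and $y$ disappears. The integrand is measurable and integrable: by \eqref{eq:Phi-growth} and \eqref{eq:annular-Linfty}, $|\Phi(K_j(x)a)|\le C_\Phi 2^{j\beta r}|a|^r$ on a bounded annulus, and it vanishes outside it. So Fubini in polar coordinates applies.

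Next I would write $x=\rho\theta$ with $2^{-j-1}<\rho\le2^{-j}$ and $\theta\in\Sph^{n-1}$. By \eqref{eq:op-kernel}, $K_j(\rho\theta)a=\rho^{\alpha-n}\widetilde K(\theta)a$. Since $\rho^{\alpha-n}>0$, positive $r$-homogeneity of $\Phi$ gives
\[
 \Phi(K_j(\rho\theta)a)=\rho^{(\alpha-n)r}\,\Phi(\widetilde K(\theta)a)=\rho^{-n}\,\Phi(\widetilde K(\theta)a),
\]
using $(n-\alpha)r=\beta r=n$. Hence
\[
 \int_{\R^n}\Phi(K_j(x)a)\,dx
 =|\Sph^{n-1}|\Bigl(\int_{2^{-j-1}}^{2^{-j}}\rho^{-n}\rho^{n-1}\,d\rho\Bigr)
 \int_{\Sph^{n-1}}\Phi(\widetilde K(\theta)a)\,d\bar\sigma(\theta),
\]
where $\bar\sigma$ is the normalized surface measure. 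The radial factor equals $\log 2$, which is finite and independent of $j$. The spherical factor vanishes by the first condition in \eqref{eq:op-cancellation}, so the whole integral is zero.

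There is no real obstacle. The only delicate point is the exponent bookkeeping. The radial weight is exactly $\rho^{-1}$ precisely because $r=n/(n-\alpha)$, and this is why the sharp annular pieces \eqref{eq:annular-kernel} produce exact rather than approximate cancellation. Note also that only positive homogeneity is used, since the scaling factor $\rho^{\alpha-n}$ is positive. The second condition in \eqref{eq:op-cancellation} is not needed here; it would handle atoms with the sign reversed, $a\mapsto -a$.
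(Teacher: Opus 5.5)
Your proposal is correct and follows the paper's proof: you discard the translation, use polar coordinates and positive $r$-homogeneity with $(\alpha-n)r=-n$ to reduce the radial factor to $\int d\rho/\rho=\log 2$, and note that the angular factor vanishes by the first condition in \eqref{eq:op-cancellation}. Your added integrability check via \eqref{eq:Phi-growth} and \eqref{eq:annular-Linfty} is a reasonable extra justification for using Fubini, which the paper leaves implicit.
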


\begin{proof}
Translation is irrelevant.  In polar coordinates,
$K_j(\rho\theta)a=\rho^{\alpha-n}\widetilde K(\theta)a$ on the annulus.
Positive $r$-homogeneity gives a radial factor
$\rho^{(\alpha-n)r}$, and $(\alpha-n)r=-n$.  Hence the radial integral is a
finite nonzero multiple of $\int_{1/2}^1d\rho/\rho$, while the angular factor
is the first integral in \eqref{eq:op-cancellation}.  The latter vanishes.
\end{proof}

The first genuinely vector-valued point of the argument is the replacement of
a local input by its vector mass.

\begin{lemma}[First-moment atomic estimate]\label{lem:first-moment-atom}
Let $g\in L^1(\R^n;E)$ be compactly supported.  Then, for every
$j\in\mathbb Z$,
\begin{equation}\label{eq:first-moment-atom}
 \left|\int_{\R^n}\Phi(K_j*g)(x)\,dx\right|
 \lesssim
 2^j\norm{g}_{L^1(E)}^{r-1}
 \inf_{c\in\R^n}\int_{\R^n}|y-c|\,|g(y)|_E\,dy.
\end{equation}
\end{lemma}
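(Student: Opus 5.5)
Fix $c\in\R^n$ and set $a=\int g\in E$.  Since $\Phi$ is positively $r$-homogeneous and $K_j(\cdot-c)a$ is an annular atom, Lemma~\ref{lem:annular-atoms} gives $\int\Phi(K_j(x-c)a)\,dx=0$.  The plan is to compare $\Phi(K_j*g)$ with this vanishing atom, using the first-order increment bound \eqref{eq:Phi-increment}:
\[
 \left|\int\Phi(K_j*g)\,dx\right|
 =\left|\int\bigl[\Phi(K_j*g(x))-\Phi(K_j(x-c)a)\bigr]dx\right|
 \le C_\Phi\int |h(x)|\,\bigl(|K_j(x-c)a|+|h(x)|\bigr)^{r-1}dx,
\]
where
\[
 h(x)=K_j*g(x)-K_j(x-c)a=\int\bigl(K_j(x-y)-K_j(x-c)\bigr)g(y)\,dy .
\]

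Next I bound the two factors in the integrand separately.  For the first factor, Minkowski's inequality and the translation estimate \eqref{eq:annular-translation} give
\[
 \norm{h}_{L^1(F)}\le\int|g(y)|_E\int\norm{K_j(x-y)-K_j(x-c)}\,dx\,dy\lesssim 2^{j(1-\alpha)}\int|y-c|\,|g(y)|_E\,dy .
\]
For the second factor, \eqref{eq:annular-Linfty} gives $|K_j(x-c)a|\lesssim2^{j\beta}|a|\le2^{j\beta}\norm g_{L^1}$.  It also gives
\[
 \norm{h}_{L^\infty}\le\norm{K_j*g}_\infty+\norm{K_j(\cdot-c)a}_\infty\lesssim2^{j\beta}\norm g_{L^1}.
\]
Since $r>1$, pulling out the $L^\infty$ bound for the second factor yields
\[
 \left|\int\Phi(K_j*g)\right|\lesssim 2^{j\beta(r-1)}\norm g_{L^1}^{r-1}\,2^{j(1-\alpha)}\int|y-c||g(y)|\,dy .
\]
Now use $\beta r=n$, so that $\beta(r-1)=n-\beta=\alpha$.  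The exponent of $2^j$ is then $\alpha+1-\alpha=1$.  Taking the infimum over $c$ gives \eqref{eq:first-moment-atom}.

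The only point needing care is the vector-valued step.  The atom must be built from the vector mass $a=\int g$ rather than from a scalar, and the difference $h$ must be estimated through the operator norm of the kernel difference against the scalar measure $|g|_E\,dy$.  Once this is written as above, everything is controlled by Lemma~\ref{lem:annular-translation}.  Compact support of $g$ is used only to make all the integrals finite, and to ensure the infimum is over finite quantities.  I expect no step to be a genuine obstacle: the main check is the exponent bookkeeping $\beta(r-1)+1-\alpha=1$, together with using the Lipschitz part $2^{j(1-\alpha)}|y-c|$ rather than the trivial part of the minimum in \eqref{eq:annular-translation}.
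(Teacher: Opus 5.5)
Your proposal is correct and follows the paper's proof essentially step for step. You compare with the vanishing atom $K_j(\cdot-c)a$, $a=\int g$, through the increment bound \eqref{eq:Phi-increment}. You bound the $(r-1)$-power factor by $(2^{j\beta}\norm g_{L^1(E)})^{r-1}$, control the difference in $L^1$ via Fubini and the Lipschitz branch of \eqref{eq:annular-translation}, and close with $\beta(r-1)=\alpha$.
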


\begin{proof}
Fix $c\in\R^n$ and set $a=\int g\in E$.  By
Lemma~\ref{lem:annular-atoms},
$\int\Phi(K_j(\cdot-c)a)=0$.  Moreover,
\[
 |K_j*g(x)|_F+|K_j(x-c)a|_F
 \lesssim2^{j\beta}\norm g_{L^1(E)}.
\]
Using \eqref{eq:Phi-increment}, Fubini, and
Lemma~\ref{lem:annular-translation},
\begin{align*}
 \left|\int\Phi(K_j*g)\right|
 &\le \int\left|\Phi(K_j*g)-\Phi(K_j(\cdot-c)a)\right|\\
 &\lesssim
 (2^{j\beta}\norm g_1)^{r-1}
 \int |g(y)|_E
 \int\norm{K_j(x-y)-K_j(x-c)}\,dx\,dy\\
 &\lesssim
 (2^{j\beta}\norm g_1)^{r-1}
 2^{j(1-\alpha)}
 \int |y-c|\,|g(y)|_E\,dy.
\end{align*}
Since $\beta(r-1)=\alpha$, the power of $2^j$ is exactly one.  Taking the
infimum over $c$ proves the claim.
\end{proof}

The proof above is the operator-input counterpart of the core estimate in
\cite[Lemma~3.1 and Corollary~3.2]{Stolyarov24}.  The next localization is the
only place where the nonlinear integral has to be organized spatially.

For $j\in\mathbb Z$ and $k\in\mathbb Z^n$, put
\[
 Q_{j,k}=2^{-j}(k+[0,1)^n).
\]
For a cube $Q$ define
\begin{equation}\label{eq:mass-moment}
 m_Q(f)=\int_Q|f(x)|_E\,dx,
 \qquad
 \mu_Q(f)=\inf_{c\in\R^n}\int_Q|x-c|\,|f(x)|_E\,dx.
\end{equation}

\begin{lemma}[Spatial localization of one annulus]\label{lem:one-annulus-localization}
There is a fixed dilation factor $\Lambda=\Lambda(n)>1$ such that, for every
$j\in\mathbb Z$ and every compactly supported $f\in L^1(\R^n;E)$,
\begin{equation}\label{eq:one-annulus-localization}
 \left|\int_{\R^n}\Phi(K_{j+1}*f)(x)\,dx\right|
 \lesssim
 2^j\sum_{k\in\mathbb Z^n}
 m_{\Lambda Q_{j,k}}(f)^{r-1}\mu_{\Lambda Q_{j,k}}(f).
\end{equation}
\end{lemma}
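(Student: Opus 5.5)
We plan to decompose $f$ by a partition subordinate to the cubes $Q_{j,k}$, expand $\Phi(K_{j+1}*f)$ as a sum over pieces using the near-disjointness of the supports of $K_{j+1}*(f\mathbf 1_{Q_{j,k}})$, and then apply Lemma~\ref{lem:first-moment-atom} to each localized piece.

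Write $f_k=f\mathbf 1_{Q_{j,k}}$, so $f=\sum_k f_k$ with finitely many nonzero terms. Since $K_{j+1}$ is supported in $\{|x|\le2^{-j-1}\}$, the function $K_{j+1}*f_k$ is supported in the $2^{-j-1}$-neighbourhood of $Q_{j,k}$. At any point $x$ only boundedly many (depending on $n$) of the $K_{j+1}*f_k$ are nonzero. The natural first attempt, $\int\Phi(\sum_k g_k)\approx\sum_k\int\Phi(g_k)$ with $g_k=K_{j+1}*f_k$, fails because the error from \eqref{eq:Phi-second-difference} involves cross terms $\Gamma_r(|g_k|,|g_{k'}|)$ between neighbouring cubes. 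These are not controlled by first moments.

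The cleaner route is to localize in $x$ instead. We partition $\R^n=\bigsqcup_k Q_{j,k}$ in the output variable:
\[
 \int\Phi(K_{j+1}*f)=\sum_k\int_{Q_{j,k}}\Phi(K_{j+1}*F_k)\,dx,
\qquad F_k=f\mathbf 1_{\Lambda Q_{j,k}}.
\]
This identity holds for $x\in Q_{j,k}$ once $\Lambda$ is large, since then $K_{j+1}*f(x)=K_{j+1}*F_k(x)$. The difficulty is that the integral is now over $Q_{j,k}$ rather than over $\R^n$, so Lemma~\ref{lem:annular-atoms} does not apply directly.

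To fix this we reinstate the cancellation by comparison with the atom. Let $a_k=\int F_k$ and choose $c_k$ nearly optimal for $\mu_{\Lambda Q_{j,k}}$. Then
\[
 \int_{Q_{j,k}}\Phi(K_{j+1}*F_k)=\int_{Q_{j,k}}\bigl[\Phi(K_{j+1}*F_k)-\Phi(K_{j+1}(\cdot-c_k)a_k)\bigr]+\int_{Q_{j,k}}\Phi(K_{j+1}(\cdot-c_k)a_k).
\]
The first bracket is handled exactly as in the proof of Lemma~\ref{lem:first-moment-atom}. Using \eqref{eq:Phi-increment}, the pointwise size bound $2^{j\beta}m_{\Lambda Q_{j,k}}$, and \eqref{eq:annular-translation}, it is bounded by
\[
 2^j\,m_{\Lambda Q_{j,k}}^{r-1}\,\mu_{\Lambda Q_{j,k}}.
\]

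The second term, the atom restricted to $Q_{j,k}$, is the main obstacle, since it does not vanish. We sum over $k$ after regrouping, writing $a_k=\sum_{k'}\int_{Q_{j,k'}}f$ over the neighbours $k'$ of $k$ with $Q_{j,k'}\subset\Lambda Q_{j,k}$. The plan is then to replace $c_k$ by a fixed reference point for each source cube and to move translations between cubes. The resulting error is of order $2^{j\beta r}\cdot2^{-j}\cdot|c-c'|\cdot2^{j}$, which is too large unless it is paired with cancellation.

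The remedy we would try first is a smooth partition of unity $\sum_k\psi_k(x)=1$ in the output variable, in place of the sharp cubes. Here $\psi_k$ is adapted to $Q_{j,k}$ with $|\nabla\psi_k|\lesssim2^j$. Then
\[
 \sum_k\int\psi_k\,\Phi(K_{j+1}(\cdot-c_k)a_k)
\]
can be compared with $\int\Phi(K_{j+1}(\cdot-c)a)$ for the single mass $a$ of a source cube. Exchanging the centre $c_k$ for a common point costs $|c_k-c|$ times the translation estimate. Combined with \eqref{eq:Phi-second-difference} to split $\Phi$ of the summed atoms, this should bound the non-cancelling remainder again by $\sum_k 2^j m^{r-1}\mu$ over the dilated cubes. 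We expect the bookkeeping of these second-difference cross terms between neighbouring cubes to be the delicate point. If it fails, we would enlarge $\Lambda$ and absorb the error into the moment term via $\mu_{\Lambda Q}\gtrsim\operatorname{dist}\cdot m$ for separated mass.
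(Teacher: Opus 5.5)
\paragraph{The reduction is circular.} Your output-variable identity $\int\Phi(K_{j+1}*f)=\sum_k\int_{Q_{j,k}}\Phi(K_{j+1}*F_k)$ (for $\Lambda\ge2$) is correct, and so is your bound $\lesssim 2^j m_{\Lambda Q_{j,k}}^{r-1}\mu_{\Lambda Q_{j,k}}$ for the first bracket. But together they make no progress. The leftover sum $R=\sum_k\int_{Q_{j,k}}\Phi(K_{j+1}(\cdot-c_k)a_k)$ differs from $\int\Phi(K_{j+1}*f)$ by at most a constant times the right-hand side of \eqref{eq:one-annulus-localization}. Bounding $R$ is therefore exactly as hard as the lemma itself.

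Your last two paragraphs (smooth partition, exchanging centres, second differences between neighbours, enlarging $\Lambda$) are a hope, not an argument, as you flag yourself. The obstruction is structural. The only cancellation available, Lemma~\ref{lem:annular-atoms}, is an identity for an atom integrated over all of $\R^n$. Once you localize in $x$, the data $(a_k,c_k)$ change from one output cube to the next whenever mass enters or leaves $\Lambda Q_{j,k}$. So the restricted atoms never reassemble into full atoms, and a partition of unity in $x$ does not change that.

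\paragraph{The missing idea.} You must cut the input, not the output, so that every piece is estimated over all of $\R^n$ by Lemma~\ref{lem:first-moment-atom}, and the cost of cutting must be proportional to the first moment. The paper gets this by importing Stolyarov's induction. There the input is split one coordinate at a time, finite propagation of $K_{j+1}$ and additivity over pieces with separated images handle the splitting, and Lemmas~\ref{lem:annular-atoms} and~\ref{lem:first-moment-atom} are the only places where the vector mass enters.

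Your first attempt was closer, and it can be repaired directly with two changes.
\begin{enumerate}[label=\textup{(\roman*)},leftmargin=2.4em]
\item On an output cube $Q=Q_{j,i}$, do not expand pairwise by $\Gamma_r$. Instead compare with the single piece containing a minimizer $c_Q$ of $\mu_{3Q}(f)$. By \eqref{eq:Phi-increment} and \eqref{eq:Phi-growth}, $\bigl|\Phi(\sum_k g_k)-\sum_k\Phi(g_k)\bigr|$ is bounded pointwise on $Q$ by $C(2^{j\beta}m_{3Q}(f))^{r-1}\int\norm{K_{j+1}(x-y)}_{\cL(E,F)}|f(y)|_E\mathbf 1_{\{y\notin Q^t(c_Q)\}}\,dy$. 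Integrating in $x$ with \eqref{eq:annular-L1} and $\beta(r-1)=\alpha$ gives $\lesssim m_{3Q}(f)^{r-1}B_Q(t)$. Here $B_Q(t)$ is the mass of $f$ in $3Q$ lying outside the cell $Q^t(c_Q)$ of $c_Q$.
\item Cut with the shifted grid $t+2^{-j}(k+[0,1)^n)$ and average over $t\in[0,2^{-j})^n$. A point $y$ falls in a different cell from $c_Q$ with probability at most $\sqrt n\,2^j|y-c_Q|$, so $\mathbb E_tB_Q(t)\le\sqrt n\,2^j\mu_{3Q}(f)$.
\end{enumerate}
The pieces themselves are handled as follows. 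The cell with index $k$ lies in $3Q_{j,k}$, and $m$ and $\mu$ are monotone in the domain. So Lemma~\ref{lem:first-moment-atom} bounds the sum of the pieces by the right-hand side for every $t$, and the lemma follows with $\Lambda=3$.

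This is precisely what your fallback ``separated mass is charged to $\mu$'' needs. With a fixed grid, a cluster straddling a face is cut at no cost to $\mu$; randomizing the grid is what makes the cutting cost proportional to $\mu$.
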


\begin{proof}
After scaling, this is the localization statement in
\cite[Theorem~3.1 and its proof]{Stolyarov24}.  We record why the proof survives
for an $E$-valued input.  Stolyarov's induction enlarges the class of admissible
supports one coordinate at a time.  At each stage the input is split into
pieces $g$, and only three properties are used: finite propagation of the
annular kernel, the triangle inequality for the scalar quantity
$\bigl|\int\Phi(K_{j+1}*g)\bigr|$, and a first-moment bound depending on the
positive measure $|g|\,dx$ and on the total mass of $g$.

For an $E$-valued piece the positive measure is simply $|g|_E\,dx$, while its
total mass is the vector
\[
 a_g=\int g\in E.
\]
The only step in which this mass is inserted into the kernel is the atomic
comparison.  Lemma~\ref{lem:first-moment-atom} gives exactly the required
estimate, and Lemma~\ref{lem:annular-atoms} gives the corresponding atomic
cancellation.  All subsequent subdivisions, support-separation arguments, and
summations are scalar statements about $|g|_E\,dx$; no order or sign of the
values of $g$ is used.  Thus the induction proving
\cite[Theorem~3.1]{Stolyarov24} carries over with $|g|$ replaced by $|g|_E$
and scalar kernel absolute values replaced by operator norms.  Scaling back
from the unit annulus gives \eqref{eq:one-annulus-localization}.
\end{proof}

\subsection{The dyadic energy is scalar}
The rest of the localization mechanism depends only on the positive measure
$|f|_E\,dx$.  For a dyadic cube $Q$, let $\cD_h(Q)$ be its descendants of
generation $h$ and set
\begin{equation}\label{eq:energy}
 \mathcal E_{Q,h}[f]
 =\sum_{S\in\cD_h(Q)}m_S(f)^r.
\end{equation}
Then
\begin{equation}\label{eq:energy-monotone}
 0\le\mathcal E_{Q,h+1}[f]\le\mathcal E_{Q,h}[f]
 \le m_Q(f)^r.
\end{equation}
We use the following mass lemma.

\begin{lemma}[Dyadic first moment]\label{lem:dyadic-first-moment}
There are constants $0<\rho<1$ and $C<\infty$, depending only on $n$ and
$r$, such that for every dyadic cube $Q$,
\begin{equation}\label{eq:dyadic-first-moment}
 \frac{m_Q(f)^{r-1}}{\ell(Q)}\mu_Q(f)
 \le C\sum_{h\ge0}\rho^h
 \bigl(\mathcal E_{Q,h}[f]-\mathcal E_{Q,h+1}[f]\bigr).
\end{equation}
\end{lemma}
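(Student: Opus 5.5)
The plan is to reduce \eqref{eq:dyadic-first-moment} to a one-step ``energy drop'' inequality and then follow a chain of heaviest descendants. Only the positive measure $|f|_E\,dx$ enters, so the argument is scalar. We take $\rho=1/2$.

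\emph{Step 1 (one-step drop).} For a dyadic cube $S$ with children $S_1,\dots,S_{2^n}$, put $m=m_S(f)$ and $M=\max_i m_{S_i}(f)$. I claim
\[
 m^r-\sum_i m_{S_i}(f)^r\ \ge\ c_r\, m^{r-1}(m-M).
\]
To see this, first use $\sum_i m_{S_i}^r\le M^{r-1}\sum_i m_{S_i}=M^{r-1}m$. This gives a lower bound $m\,(m^{r-1}-M^{r-1})$ for the left side. Next, $m^{r-1}-M^{r-1}\ge c_r m^{r-2}(m-M)$ for $0\le M\le m$. For $r\ge2$ this follows because $t\mapsto t^{r-2}$ is nondecreasing. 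For $1<r<2$ it follows from concavity of $t^{r-1}$, since the derivative is smallest at the larger endpoint $m$. Summing the claim over the cubes of generation $h$ gives
\[
 \mathcal E_{Q,h}-\mathcal E_{Q,h+1}\ \ge\ c_r\sum_{S\in\cD_h(Q)} m_S^{r-1}\bigl(m_S-\max_{S'\text{ child of }S}m_{S'}\bigr).
\]

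\emph{Step 2 (heaviest chain and the moment).} Set $S_0=Q$, and let $S_{h+1}$ be a child of $S_h$ of maximal mass. Define the lost mass $L_h=m_{S_h}-m_{S_{h+1}}\ge0$. By Step 1, keeping only the term $S=S_h$,
\[
 \mathcal E_{Q,h}-\mathcal E_{Q,h+1}\ \ge\ c\,m_{S_h}^{r-1}L_h .
\]
Choose $c$ to be the point of $\bigcap_h\overline{S_h}$. The mass in $S_h\setminus S_{h+1}$ equals $L_h$ and lies within distance $\sqrt n\,2^{-h}\ell(Q)$ of $c$. The mass remaining in $S_H$ lies within $\sqrt n\,2^{-H}\ell(Q)$ of $c$. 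Hence, for every $H$,
\[
 \frac{\mu_Q}{\ell(Q)}\ \lesssim\ \sum_{h<H}2^{-h}L_h+2^{-H}m_{S_H}.
\]

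\emph{Step 3 (case split; the main point).} The obstacle is that $m_{S_h}^{r-1}$ in the drop may be much smaller than the factor $m_Q^{r-1}$ on the left. Let $h_0$ be the first index with $m_{S_{h_0}}<m_Q/2$.

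If no such $h_0$ exists, let $H\to\infty$; the remainder term vanishes since $m_{S_H}\le m_Q$ stays bounded. Then $m_{S_h}\ge m_Q/2$ for all $h$, so
\[
 m_Q^{r-1}2^{-h}L_h\lesssim 2^{-h}m_{S_h}^{r-1}L_h\lesssim \rho^h(\mathcal E_{Q,h}-\mathcal E_{Q,h+1}),
\]
and summing over $h$ proves the claim.

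If $h_0$ exists, take $H=h_0$. The terms with $h<h_0$ are handled exactly as in the first case, since $m_{S_h}\ge m_Q/2$ there. For the remainder, note that $\sum_{h<h_0}L_h=m_Q-m_{S_{h_0}}>m_Q/2$. Therefore
\[
 m_Q^{r-1}2^{-h_0}m_{S_{h_0}}\le 2^{-h_0}m_Q^r\lesssim 2^{-h_0}\sum_{h<h_0}m_{S_h}^{r-1}L_h\le\sum_{h<h_0}\rho^h\,c^{-1}(\mathcal E_{Q,h}-\mathcal E_{Q,h+1}),
\]
using $\rho^h\ge\rho^{h_0}$ for $h<h_0$.

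Combining the cases gives \eqref{eq:dyadic-first-moment} with $\rho=1/2$ and $C=C(n,r)$. Every term $\mathcal E_{Q,h}-\mathcal E_{Q,h+1}$ is nonnegative by \eqref{eq:energy-monotone}, so discarding the other cubes of each generation in Step 1 is legitimate. The step that needs care is Step 3: the halving threshold is exactly what converts the geometric factor $2^{-h}$ into the weights $\rho^h$.
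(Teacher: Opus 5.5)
Your proof is correct and follows essentially the paper's route. The paper applies Stolyarov's Lemma~4.2 to the scalar measure $|f|_E\mathbf 1_Q\,dx$, and that lemma's proof rests on the same chain of nested maximal-mass children; you have written the scalar argument out in full (one-step energy drop, heaviest chain centred at $\bigcap_h\overline{S_h}$, halving threshold giving $\rho=1/2$). Two cosmetic points: for $r\ge2$ the one-step bound is cleanest written as $M^{r-1}=M^{r-2}M\le m^{r-2}M$, and the letter $c$ is used both for the constant of Step~1 and for the centre point.
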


\begin{proof}
Apply \cite[Lemma~4.2]{Stolyarov24} to the scalar nonnegative function
$|f|_E\mathbf1_Q$.  Both sides of that lemma depend only on the masses of the
positive measure $|f|_E\,dx$ on dyadic descendants of $Q$.  Hence its proof,
including the choice of the nested maximal-mass children, applies without any
change.  The factor $\ell(Q)^{-1}$ is restored by scaling.
\end{proof}

We shall also use the elementary separation form of the energy drop.  If
$A,B$ are disjoint unions of descendants at one fixed generation of $Q$, and
$a=m_A(f)$, $b=m_B(f)$, then for a sufficiently deep but fixed generation
$M$ containing both families,
\begin{equation}\label{eq:separated-energy}
 \Gamma_r(a,b)
 \lesssim \mathcal E_{Q,0}[f]-\mathcal E_{Q,M}[f].
\end{equation}
For $1<r\le2$ this is \cite[Lemma~6.7]{Stolyarov24}.  For $r>2$ it follows
from the same convexity computation because
\begin{equation}\label{eq:Br-convexity}
 B_r(a,b)\lesssim (a+b)^r-a^r-b^r,
 \qquad a,b\ge0.
\end{equation}
Indeed, after normalizing $a+b=1$, the quotient of the right side by
$a^{r-1}b+ab^{r-1}$ extends continuously and positively to the closed
interval.

The Three Lattice Theorem allows every fixed dilation of a dyadic cube to be
placed, with comparable sidelength, in one of finitely many shifted dyadic
grids; see \cite[Section~3]{LernerNazarov19}.  Combining this with
Lemmas~\ref{lem:one-annulus-localization} and
\ref{lem:dyadic-first-moment} gives the first global summation.

\begin{proposition}[Pure annuli]\label{prop:pure-annuli}
Assume $\supp f\subset B_{1/2}$ and $f\in L^1(\R^n;E)$.  Then
\begin{equation}\label{eq:pure-annuli}
 \sum_{j\ge1}
 \left|\int_{\R^n}\Phi(K_j*f)(x)\,dx\right|
 \lesssim \norm f_{L^1(E)}^r.
\end{equation}
\end{proposition}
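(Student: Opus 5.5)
Each annulus will be bounded with Lemma~\ref{lem:one-annulus-localization} and Lemma~\ref{lem:dyadic-first-moment}, and the scales will be summed through the telescoping energy drops \eqref{eq:energy}. First fix $j\ge1$ and apply Lemma~\ref{lem:one-annulus-localization} with $j-1$ in place of $j$, so the kernel is $K_j$. This gives
\[
 \left|\int\Phi(K_j*f)\right|
 \lesssim 2^{j}\sum_{k}m_{\Lambda Q_{j-1,k}}(f)^{r-1}\mu_{\Lambda Q_{j-1,k}}(f).
\]
Since $\supp f\subset B_{1/2}$ and $j\ge1$, only boundedly many $k$ contribute at the coarsest scales. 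By the Three Lattice Theorem, each dilated cube $\Lambda Q_{j-1,k}$ lies in a cube $\widetilde Q$ of comparable sidelength $\ell(\widetilde Q)\sim 2^{-j}$ from one of $3^n$ shifted dyadic grids $\cD^{(i)}$. Masses and moments are monotone in the cube: $m$ increases with the cube, and $\mu$ does too, because the infimum is over a larger integrand. Each $\widetilde Q$ is hit by boundedly many $k$. Hence the $j$-th term is bounded by
\[
 \sum_{i=1}^{3^n}\ \sum_{\widetilde Q\in\cD^{(i)},\ \ell(\widetilde Q)\sim2^{-j}}
 \frac{m_{\widetilde Q}(f)^{r-1}\mu_{\widetilde Q}(f)}{\ell(\widetilde Q)} .
\]

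Next apply Lemma~\ref{lem:dyadic-first-moment} in each grid. The grid-$i$ part of the sum over $j\ge1$ becomes
\[
 \sum_{\widetilde Q\in\cD^{(i)}}\sum_{h\ge0}\rho^h\bigl(\mathcal E_{\widetilde Q,h}-\mathcal E_{\widetilde Q,h+1}\bigr),
\]
where $\widetilde Q$ runs over cubes of sidelength at most a fixed constant that meet $B_{1/2}$. The key is to reorganize by the generation of the descendants. A drop $\mathcal E_{\widetilde Q,h}-\mathcal E_{\widetilde Q,h+1}$ equals $\sum_{S\in\cD_h(\widetilde Q)}\bigl(m_S^r-\sum_{S'\text{ child of }S}m_{S'}^r\bigr)$, and each summand is nonnegative because $t\mapsto t^r$ is superadditive. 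Write $\delta(S)$ for this one-step drop at $S$, so that each drop is $\sum_{S\in\cD_h(\widetilde Q)}\delta(S)$. Each dyadic cube $S$ of grid $i$ then receives weight $\sum_{h\ge0}\rho^h\le(1-\rho)^{-1}$, since it is a depth-$h$ descendant of exactly one ancestor for each $h$. The total is therefore
\[
 \lesssim\sum_{S}\delta(S)\le \sum_{S_0}m_{S_0}(f)^r\le\norm f_{L^1(E)}^r .
\]
Here $S_0$ runs over the top-level cubes of grid $i$ meeting $B_{1/2}$, of which there are boundedly many. The middle inequality holds because $\sum_S\delta(S)$ telescopes, with the tail $\mathcal E_{S_0,h}\to$ a nonnegative limit. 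The last inequality uses $\sum m_{S_0}^r\le(\sum m_{S_0})^r$.

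The step I expect to be the main obstacle is the bookkeeping of the first paragraph. The index shift $j\mapsto j+1$ must be matched correctly, and the enlargement from $\Lambda Q$ to a shifted-grid cube must cost only fixed constants and a bounded overlap at each scale. The monotonicity of $\mu_Q$ is immediate; the overlap count at each scale is what must be checked carefully. The top-scale issue is handled by $j\ge1$ together with $\supp f\subset B_{1/2}$, which keeps all relevant cubes inside a fixed ball. Finally, $f$ is only assumed to be in $L^1$ rather than smooth; if needed, I will first prove the estimate for $f$ with a bounded density and pass to the limit using continuity of $f\mapsto\int\Phi(K_j*f)$ in $L^1$ for fixed $j$, which follows from \eqref{eq:Phi-increment} and \eqref{eq:annular-Linfty}, then use Fatou on the sum.
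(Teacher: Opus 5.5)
Your argument is correct and is essentially the paper's proof. It applies Lemma~\ref{lem:one-annulus-localization}, places the dilated cubes in finitely many shifted dyadic grids via the Three Lattice Theorem, applies Lemma~\ref{lem:dyadic-first-moment}, and telescopes the energy drops. Your per-cube weight $\sum_{h\ge0}\rho^h$ on the one-step drop $\delta(S)$ is exactly the paper's regrouping by generation $m=j+h$ with weight $\sum_{0\le h\le m}\rho^h$. The only differences are cosmetic: you bound the top-level sum by superadditivity of $t\mapsto t^r$ instead of bounded overlap of top cubes. Your final approximation step is also unnecessary, since Lemma~\ref{lem:one-annulus-localization} already applies to compactly supported $L^1$ inputs.
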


\begin{proof}
Apply Lemma~\ref{lem:one-annulus-localization}.  The iterated Three Lattice
Theorem places each $\Lambda Q_{j,k}$, with comparable sidelength, as a
descendant $R$ of one of finitely many fixed top cubes $Q^{(\nu)}$ in shifted
dyadic grids.  Lemma~\ref{lem:dyadic-first-moment} gives for such an $R$
\[
 \frac{m_R(f)^{r-1}}{\ell(R)}\mu_R(f)
 \lesssim\sum_{h\ge0}\rho^h
 \bigl(\mathcal E_{R,h}[f]-\mathcal E_{R,h+1}[f]\bigr).
\]
If $R$ ranges over all descendants of $Q^{(\nu)}$ at generation $j$, then
summing the bracket over $R$ is exactly
\[
 \mathcal E_{Q^{(\nu)},j+h}[f]
 -\mathcal E_{Q^{(\nu)},j+h+1}[f].
\]
Hence the sum over $j$ is bounded by
\[
 C\sum_{m\ge0}
 \left(\sum_{0\le h\le m}\rho^h\right)
 \bigl(\mathcal E_{Q^{(\nu)},m}[f]
       -\mathcal E_{Q^{(\nu)},m+1}[f]\bigr),
\]
which telescopes and is at most $C m_{Q^{(\nu)}}(f)^r$.  Finally the finitely
many top cubes may be chosen with bounded overlap, so
$\sum_\nu m_{Q^{(\nu)}}(f)^r\lesssim\norm f_1^r$.  This proves
\eqref{eq:pure-annuli}.
\end{proof}

\subsection{Mixed annular scales}
We next prove the estimate needed for the nonlinear increments.  This is the
part of the transfer where the regimes $r\le2$ and $r>2$ differ.

\begin{proposition}[Mixed-scale summation]\label{prop:mixed-scales}
Assume $\supp f\subset B_{1/2}$.  Then
\begin{equation}\label{eq:mixed-scales}
 \sum_{j\ge0}\int_{\R^n}
 \Gamma_r\bigl(|K_{\le j}*f(x)|_F,|K_{j+1}*f(x)|_F\bigr)\,dx
 \lesssim \norm f_{L^1(E)}^r.
\end{equation}
\end{proposition}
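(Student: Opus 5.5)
The plan is to reduce \eqref{eq:mixed-scales} to a statement about the positive measure $|f|_E\,dx$ and then run the dyadic-energy summation of Proposition~\ref{prop:pure-annuli}. Since $\Gamma_r$ is nondecreasing in each variable, it suffices to bound its arguments from above by scalar quantities. By \eqref{eq:annular-Linfty},
\[
|K_{j+1}*f(x)|_F\lesssim 2^{j\beta}\int_{2^{-j-2}<|x-y|\le2^{-j-1}}|f(y)|_E\,dy,
\qquad
|K_{\le j}*f(x)|_F\lesssim\sum_{i\le j}2^{i\beta}\int_{2^{-i-1}<|x-y|\le2^{-i}}|f(y)|_E\,dy .
\]
The two regions of integration are disjoint: the first is the shell at distance $\sim2^{-j-1}$ and the second is the complement of $B(x,2^{-j-1})$. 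This disjointness is exactly the separation that \eqref{eq:separated-energy} turns into an energy drop. From this point on $E$, $F$ and the operator values of $K$ play no role, and only the moduli $|f|_E$ enter.

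Next I localize in space. Fix $j\ge0$, and restrict $x$ to a cube $Q_{j,k}$; only boundedly many $k$ per unit mass are relevant, because $\supp f\subset B_{1/2}$. Replace the shell masses by masses of fixed dilations of dyadic cubes, using the Three Lattice Theorem as in Proposition~\ref{prop:pure-annuli}. Then $b=m(\text{a $2^{-j}$-cube near }Q_{j,k})$ is the mass controlling the $K_{j+1}$ term, and $a_i$, for $i\le j$, is the mass of a union of $2^{-i}$-scale cubes at distance $\gtrsim2^{-j}$ from the first. Integrating over $Q_{j,k}$, which has volume $2^{-jn}$, and using the joint $r$-homogeneity of $\Gamma_r$ together with $\beta r=n$, the contribution becomes
\[
\Gamma_r\Bigl(\sum_{i\le j}2^{(i-j)\beta}a_i,\;b\Bigr).
\]
The next step is to split the first argument across scales. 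For $1<r\le2$ I use \eqref{eq:Mr-subadditive} and then homogeneity of $M_r$ in its first slot, obtaining $\sum_i 2^{-(j-i)\beta(r-1)}M_r(a_i,b)$. For $r>2$ I apply \eqref{eq:weighted-holder} to the term $s^{r-1}t$ of $B_r$ with $s_d=2^{-d\beta}a_{j-d}$ and some $1<\tau<2^{\beta(r-1)}$; the term $st^{r-1}$ is linear in $s$. Either way I get geometric decay $2^{-\gamma(j-i)}$ with $\gamma>0$, times $\Gamma_r(a_i,b)$.

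Each $\Gamma_r(a_i,b)$ is a separated pair inside a dyadic cube $R$ of scale $\sim2^{-i}$ from one of the finitely many grids. By \eqref{eq:separated-energy} it is bounded by an energy drop of $R$ over roughly $j-i+M$ generations.

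The main obstacle is the final double summation over $j$, $i$ and the cubes. For fixed $i$ and $R$ there are about $2^{(j-i)n}$ small cubes $Q_{j,k}\subset R$, and a crude bound by $\mathcal E_{R,0}-\mathcal E_{R,j-i+M}$ for each would lose this factor. To avoid this, I would attach each pair $(a_i,b)$ not to $R$ but to the smallest dyadic ancestor that separates the $b$-cube from the relevant part of the $a_i$-mass. I would then bound $\Gamma_r(a_i,b)$ by the energy drop of that ancestor over a bounded number of generations, namely $\mathcal E_{S,h}-\mathcal E_{S,h+M}$. Summing over $k$ then reassembles consecutive drops $\mathcal E_{Q,m}-\mathcal E_{Q,m+1}$ of the top cubes, each counted with weight $\sum_d 2^{-\gamma d}<\infty$. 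The total telescopes to $\lesssim\sum_\nu m_{Q^{(\nu)}}(f)^r\lesssim\norm f_{L^1(E)}^r$, as in Proposition~\ref{prop:pure-annuli}. If this bookkeeping fails, the fallback is to note that after the first reduction the statement concerns only the nonnegative function $|f|_E$ with the scalar kernel $|x|^{\alpha-n}$. The corresponding scalar mixed-scale bounds of \cite[Sections~5--6]{Stolyarov24} then apply verbatim, in their $r\le2$ and $r>2$ versions.
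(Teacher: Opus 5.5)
There is a genuine gap, and it sits at the close scales, above all the adjacent pair $i=j$. Your first reduction is sound: $\Gamma_r$ is nondecreasing in each variable, so you may pass to the positive measure $|f|_E\,dx$ and scalar shell kernels. Indeed, the paper's proof of this proposition uses no cancellation of $\Phi$.

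The trouble comes next. The regions $\{2^{-j-2}<|x-y|\le2^{-j-1}\}$ and $\{|x-y|>2^{-j-1}\}$ are disjoint but touch along a sphere. By contrast, \eqref{eq:separated-energy} needs the two masses to lie in disjoint families of descendants of fixed depth, i.e.\ to be separated by a fixed fraction of the cube size. Once you replace the pointwise shell masses by masses $a_j,b$ that dominate them for all $x\in Q_{j,k}$, the two regions actually overlap, and your majorant is false.

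Take $f$ supported in $B(y_0,\eps)\subset B_{1/2}$. For each $j\ge0$ with $\eps\le2^{-j-2}$, the cube $Q_{j,k}\ni y_0$ has both regions containing $B(y_0,2^{-j-2})$. Hence $a_j,b\ge\norm f_1$, and
$\sum_j\Gamma_r\bigl(\sum_i2^{(i-j)\beta}a_i,b\bigr)\gtrsim\log(1/\eps)\norm f_1^r$.
No sum of energy drops, whose total is $\lesssim\norm f_1^r$, can absorb this. The adjacent-annulus interaction is small only because, for each fixed $x$, concentrated mass lies in a single shell. Aggregating over the output cube destroys exactly this.

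The paper therefore splits $K_{\le j}=K_{j-N\le j}+\sum_{m<j-N}K_m$. It handles the close block by comparison with the concentrated atoms $U_0=K_{j-N\le j}(\cdot-c)a_Q$ and $V_0=K_{j+1}(\cdot-c)a_Q$. Their radial supports meet only in a sphere, so $\Gamma_r(|U_0|,|V_0|)=0$. Local Lipschitz continuity of $\Gamma_r$ and \eqref{eq:annular-translation} then give the first-moment bound \eqref{eq:close-local}. Lemma~\ref{lem:dyadic-first-moment} sums this in $j$ through energy drops over \emph{all} generations with weights $\rho^h$. Only the genuinely separated scales $m<j-N$ are sent through \eqref{eq:separated-energy}. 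Your plan has no substitute for this first-moment step.

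There is a second, smaller gap in the separated scales. The multiplicity obstacle you flag is real, but attaching each pair to the smallest separating ancestor does not remove it. For $i<j-N$, any dyadic cube containing both the $b$-cube and the $a_i$-mass already has side $\gtrsim2^{-i}$, so you are back at $R$ and its $\simeq2^{n(j-i)}$ fine output cubes.

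The paper instead integrates over coarse output cubes of side $2^{-(m+N)}$ and aggregates the $2^{-j}$-subcubes $S$ inside them. For $r\le2$ it uses $\sum_Sb_S^{r-1}\le\#\{S\}^{2-r}(\sum_Sb_S)^{r-1}$ together with $\beta-n(2-r)=(r-1)\alpha$. This gives \eqref{eq:remote-Dm} with one $j$-independent drop $\Delta_{m,Q}$ of fixed depth per coarse cube.

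That computation needs the branchwise factors: $2^{-\alpha(j-m)}$ on $s^{r-1}t$ and $2^{-\beta(j-m)}$ on $st^{r-1}$. Your uniform factor $2^{-(j-i)\beta(r-1)}$ in front of $M_r$ discards the second. The $st^{r-1}$ branch, summed over the subcubes, then carries $2^{(j-i)(n(2-r)-\alpha)}$, which grows for $r$ near $1$.

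Finally, the fallback of quoting Stolyarov's scalar mixed-scale bounds verbatim is a deferral rather than a proof. You have not identified a statement there in the form used here. Moreover, any such argument must contain a mechanism for adjacent annuli, which is precisely what your main line lacks.
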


\begin{proof}
Choose $N=N(n)$ so large that $C_n2^{-N}\le2^{-4}$ for all fixed geometric
dilations used below.  We split
\[
 K_{\le j}=K_{j-N\le j}+\sum_{m<j-N}K_m.
\]

\emph{Close scales.}
Let $Q=Q_{j,k}$ and choose a fixed dilation $Q^*=\Lambda_NQ$ large enough
that, for every $x\in Q$, both kernels $K_{j-N\le j}(x-\cdot)$ and
$K_{j+1}(x-\cdot)$ see only points of $Q^*$.  Write
$f_Q=f\mathbf1_{Q^*}$ and $a_Q=\int f_Q$.  For $c\in\R^n$ set
\[
\begin{aligned}
 U&=K_{j-N\le j}*f_Q, & V&=K_{j+1}*f_Q,\\
 U_0&=K_{j-N\le j}(\cdot-c)a_Q,
 &V_0&=K_{j+1}(\cdot-c)a_Q.
\end{aligned}
\]
The radial supports of $U_0$ and $V_0$ are disjoint up to their common
boundary, hence $\Gamma_r(|U_0|,|V_0|)=0$ almost everywhere.  Since
$\Gamma_r$ is locally Lipschitz and $r$-homogeneous,
\[
 \Gamma_r(|U|,|V|)
 \lesssim M_Q^{r-1}\bigl(|U-U_0|+|V-V_0|\bigr),
 \qquad M_Q\lesssim 2^{j\beta}m_{Q^*}(f).
\]
Integrating on $Q$, then enlarging the $x$-integral to $\R^n$ and applying
Lemma~\ref{lem:annular-translation} to the block and to $K_{j+1}$, gives
\[
 \int_Q\Gamma_r(|U|,|V|)
 \lesssim 2^j m_{Q^*}(f)^{r-1}
 \mu_{Q^*}(f).
\]
Thus
\begin{equation}\label{eq:close-local}
 \int_Q\Gamma_r\bigl(|K_{j-N\le j}*f|,|K_{j+1}*f|\bigr)
 \lesssim2^j m_{Q^*}(f)^{r-1}\mu_{Q^*}(f).
\end{equation}
The same Three-Lattice and energy summation as in
Proposition~\ref{prop:pure-annuli} yields
\begin{equation}\label{eq:close-sum}
 \sum_{j\ge0}\int
 \Gamma_r\bigl(|K_{j-N\le j}*f|,|K_{j+1}*f|\bigr)
 \lesssim\norm f_1^r.
\end{equation}
\emph{Separated scales.}
Fix $m<j-N$ and partition the output space into the
standard dyadic cubes $Q=Q_{m+N,q}$ of sidelength $2^{-(m+N)}$.
For $x\in Q$, a source point seen by $K_{j+1}(x-\cdot)$ lies within
$C_n2^{-j}$ of $Q$, whereas a source point seen by $K_m(x-\cdot)$ has distance
between $2^{-m-1}-C_n2^{-(m+N)}$ and
$2^{-m}+C_n2^{-(m+N)}$ from $Q$.  Since $j-m>N$, these two source regions are
separated by at least $c_n2^{-m}$.

Cover the coarse source annulus by $O_{n,N}(1)$ cubes of sidelength
$2^{-(m+N)}$ and enlarge them by a fixed factor; let their union be
$\Omega_Q^{\rm c}$.  Let $\Omega_Q^{\rm f}$ be a fixed dilation of $Q$
containing every source point seen by $K_{j+1}$ for $x\in Q$.  Then
\begin{enumerate}[label=\textup{(\roman*)},leftmargin=2.4em]
\item all source points contributing to $K_m*f(x)$ for $x\in Q$ lie in
      $\Omega_Q^{\rm c}$;
\item all source points contributing to $K_{j+1}*f(x)$ for $x\in Q$ lie in
      $\Omega_Q^{\rm f}$;
\item $\dist(\Omega_Q^{\rm c},\Omega_Q^{\rm f})\ge c_n2^{-m}$.
\end{enumerate}
Set
\[
 A_Q=\int_{\Omega_Q^{\rm c}}|f|_E,
 \qquad B_Q=\int_{\Omega_Q^{\rm f}}|f|_E.
\]
Both source unions fit in a cube $R=R(Q)$ of sidelength comparable to
$2^{-m}$.  By the Three Lattice Theorem, $R$ may be chosen from one of
finitely many shifted dyadic grids.  Since the two source unions are separated by a fixed fraction of
$\ell(R)$, choose $M=M(n,N)$ so large that the depth-$M$ descendants of $R$
meeting $\Omega_Q^{\rm c}$ and those meeting $\Omega_Q^{\rm f}$ form disjoint
families.  Enlarge the source unions to these descendant families and denote
the corresponding masses by $A'_Q$ and $B'_Q$.  Then
$A_Q\le A'_Q$, $B_Q\le B'_Q$, and $\Gamma_r$ is increasing in each variable.
Lemma~6.7 of \cite{Stolyarov24} (for $1<r\le2$), or
\eqref{eq:Br-convexity} (for $r>2$), therefore gives
\begin{equation}\label{eq:AB-energy}
 \Gamma_r(A_Q,B_Q)
 \le \Gamma_r(A'_Q,B'_Q)
 \lesssim \mathcal E_{R,0}[f]-\mathcal E_{R,M}[f]
 =:\Delta_{m,Q}.
\end{equation}
Any descendants meeting neither source family are simply part of the
complement in the energy-drop estimate.  Notice that, once $m$ and $Q$ are
fixed, the sets $\Omega_Q^{\rm c},\Omega_Q^{\rm f}$ and hence
$\Delta_{m,Q}$ can be chosen independently of $j$ as long as $j-m>N$.

The map $Q\mapsto R(Q)$ has bounded multiplicity.  Indeed, for fixed $m$ all
such $R$ have sidelength $\simeq2^{-m}$, whereas the output cubes have
sidelength $2^{-(m+N)}$; since $N$ is fixed, a given $R$ can arise from at
most $C_{n,N}$ output cubes.  Put
\[
 \mathcal D_m(f)=\sum_Q\Delta_{m,Q}.
\]
After separating finitely many shifted grids and finitely many generation
offsets, there are fixed top cubes $Q^{(\nu)}$ such that, for a generation
$k=k(m)$,
\[
 \mathcal D_m(f)
 \lesssim \sum_\nu
 \bigl(\mathcal E_{Q^{(\nu)},k}[f]
       -\mathcal E_{Q^{(\nu)},k+M}[f]\bigr).
\]
As $m$ varies, each generation $k$ occurs only boundedly many times.  Hence
\begin{align}
 \sum_{m\ge-1}\mathcal D_m(f)
 &\lesssim \sum_\nu\sum_{k\ge0}
 \bigl(\mathcal E_{Q^{(\nu)},k}[f]
       -\mathcal E_{Q^{(\nu)},k+M}[f]\bigr)\notag\\
 &\le M\sum_\nu \mathcal E_{Q^{(\nu)},0}[f]
 \lesssim \norm f_1^r.                                      \label{eq:Dm-positive}
\end{align}
The last inequality uses the bounded overlap of the finitely many top cubes.

We now estimate one separated pair.  Subdivide $Q$ into cubes $S$ of
sidelength $2^{-j}$.  For $x\in S$,
\begin{equation}\label{eq:remote-pointwise}
 |K_m*f(x)|_F\lesssim2^{m\beta}A_Q,
 \qquad
 |K_{j+1}*f(x)|_F\lesssim2^{j\beta}b_S,
\end{equation}
where $b_S$ is the $|f|_E$-mass of a fixed dilation of $S$.  These dilations
have bounded overlap and lie in $\Omega_Q^{\rm f}$, so
$\sum_Sb_S\lesssim B_Q$.

Suppose first that $1<r\le2$.  Using $M_r(s,t)\le s^{r-1}t$ and
$|S|=2^{-jn}$ gives
\begin{equation}\label{eq:remote-first-small-r}
 \int_QM_r(|K_m*f|,|K_{j+1}*f|)
 \lesssim2^{-\alpha(j-m)}A_Q^{r-1}B_Q.
\end{equation}
Using instead $M_r(s,t)\le st^{r-1}$ gives first the factor
$2^{-\beta(j-m)}A_Q\sum_Sb_S^{r-1}$.  Since
$\#\{S\}\simeq2^{n(j-m-N)}$ and $r-1\le1$,
\[
 \sum_Sb_S^{r-1}
 \le \#\{S\}^{2-r}\Bigl(\sum_Sb_S\Bigr)^{r-1}.
\]
The identity
\begin{equation}\label{eq:small-r-exponent}
 \beta-n(2-r)=(r-1)\alpha
\end{equation}
then yields
\begin{equation}\label{eq:remote-second-small-r}
 \int_QM_r(|K_m*f|,|K_{j+1}*f|)
 \lesssim2^{-(r-1)\alpha(j-m)}A_QB_Q^{r-1}.
\end{equation}
Taking the better of \eqref{eq:remote-first-small-r} and
\eqref{eq:remote-second-small-r}, and using $r-1\le1$, we obtain
\begin{equation}\label{eq:remote-small-r}
 \int_QM_r(|K_m*f|,|K_{j+1}*f|)
 \lesssim2^{-\gamma_r(j-m)}M_r(A_Q,B_Q),
 \qquad \gamma_r=(r-1)\alpha>0.
\end{equation}

If $r>2$, the two monomials in $B_r$ give
\begin{align}
 \int_QB_r(|K_m*f|,|K_{j+1}*f|)
 &\lesssim2^{-\alpha(j-m)}A_Q^{r-1}B_Q
       +2^{-\beta(j-m)}A_QB_Q^{r-1}\notag\\
 &\lesssim2^{-\beta(j-m)}B_r(A_Q,B_Q),
 \label{eq:remote-large-r}
\end{align}
because $\sum_Sb_S^{r-1}\le(\sum_Sb_S)^{r-1}$ and
$\alpha=\beta(r-1)>\beta$.

Combining \eqref{eq:AB-energy} with \eqref{eq:remote-small-r} or
\eqref{eq:remote-large-r}, and then summing over $Q$, gives
\begin{equation}\label{eq:remote-Dm}
 \int_{\R^n}\Gamma_r(|K_m*f|,|K_{j+1}*f|)
 \lesssim2^{-\gamma(j-m)}\mathcal D_m(f),
\end{equation}
with $\gamma=(r-1)\alpha$ for $r\le2$ and $\gamma=\beta$ for $r>2$.
Notice also that only finitely many negative $m$ can contribute.  Indeed, if
$j\ge0$ and $K_{j+1}*f(x)\ne0$, then $|x|\le1$; with
$\supp f\subset B_{1/2}$ this forces $K_m*f(x)=0$ for every $m\le-2$.
Thus only $m=-1$ has to be retained among the negative coarse scales, and it
is already included in \eqref{eq:Dm-positive}.

For $1<r\le2$, monotonicity and the subadditivity
\eqref{eq:Mr-subadditive} give
\[
 M_r\Bigl(\big|\sum_{m<j-N}K_m*f\big|,|K_{j+1}*f|\Bigr)
 \lesssim\sum_{m<j-N}M_r(|K_m*f|,|K_{j+1}*f|).
\]
Using the vanishing for $m\le-2$, Tonelli's theorem, and
\eqref{eq:remote-Dm}, the entire remote contribution is bounded by
\[
 \sum_{m\ge-1}\mathcal D_m(f)
 \sum_{q\ge N+1}2^{-\gamma q}
 \lesssim \sum_{m\ge-1}\mathcal D_m(f)
 \lesssim\norm f_1^r.
\]

For $r>2$, put
$s_k=|K_{j-N-k}*f|$, $k\ge1$, and $t=|K_{j+1}*f|$.  Since $B_r$ is increasing
in each variable,
\[
 B_r\Bigl(\big|\sum_{k\ge1}K_{j-N-k}*f\big|,t\Bigr)
 \le B_r\Bigl(\sum_{k\ge1}s_k,t\Bigr).
\]
The term linear in the first variable is subadditive, while
\eqref{eq:weighted-holder} gives, for every $\tau>1$,
\begin{equation}\label{eq:weighted-Br}
 B_r\Bigl(\sum_{k\ge1}s_k,t\Bigr)
 \lesssim\sum_{k\ge1}\tau^k B_r(s_k,t).
\end{equation}
Here the corresponding coarse index is exactly
$m=j-N-k$.  By the vanishing for $m\le-2$, only $m\ge-1$ contributes.  Thus,
after applying \eqref{eq:remote-Dm} and reindexing $j=m+N+k$, the remote sum
is at most
\[
 C2^{-\beta N}
 \sum_{m\ge-1}\mathcal D_m(f)
 \sum_{k\ge1}(\tau2^{-\beta})^k.
\]
Choose $1<\tau<2^\beta$.  The inner series converges, and
\eqref{eq:Dm-positive} proves the remote estimate for $r>2$.
Together with \eqref{eq:close-sum}, this proves \eqref{eq:mixed-scales}.
\end{proof}

\subsection{Completion of the operator-input theorem}
We first dispose of the lowest-frequency block.

\begin{lemma}[Mean-zero coarse block]\label{lem:coarse-block}
If $\supp f\subset B_{1/2}$ and $\int f=0$, then
\begin{equation}\label{eq:coarse-block}
 \norm{K_{\le0}*f}_{L^r(F)}\lesssim\norm f_{L^1(E)}.
\end{equation}
\end{lemma}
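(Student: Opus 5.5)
We split the output space into the near region $|x|\le 2$ and the far region $|x|>2$, and in each use the support of $f$ in $B_{1/2}$ together with the scale restriction of $K_{\le0}=\sum_{j\le0}K_j$, which is supported in $\{|x|>1/2\}$.

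\emph{Near region.} Take $|x|\le2$ and $y\in B_{1/2}$. The kernel $K_{\le0}(x-y)$ vanishes unless $|x-y|>1/2$, and on that set \eqref{eq:op-kernel} gives $\norm{K_{\le 0}(x-y)}\lesssim |x-y|^{\alpha-n}\lesssim 1$. Hence
\[
|K_{\le0}*f(x)|_F\lesssim\norm f_{L^1(E)}
\]
for $|x|\le2$. Integrating the $r$-th power over $B_2$ bounds this contribution by $\norm f_{L^1(E)}$. No cancellation is needed here.

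\emph{Far region.} Take $|x|>2$ and $y\in B_{1/2}$. Then $|x-y|>3/2$, so $K_{\le0}(x-y)=K(x-y)$; the sharp cutoff plays no role. We use $\int f=0$ to write
\[
K_{\le0}*f(x)=\int_{B_{1/2}}\bigl(K(x-y)-K(x)\bigr)f(y)\,dy.
\]
On the segment from $x$ to $x-y$ every point has modulus at least $|x|/2$. There the homogeneous extension of a Lipschitz profile satisfies the pointwise derivative bound
\[
\norm{\nabla K(z)}_{\cL(E,F)}\lesssim|z|^{\alpha-n-1}.
\]
This is the same Lipschitz estimate invoked in the proof of Lemma~\ref{lem:annular-translation}, valid away from the origin. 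It gives
\[
\norm{K(x-y)-K(x)}\lesssim|y|\,|x|^{\alpha-n-1}\lesssim|x|^{\alpha-n-1},
\]
and therefore
\[
|K_{\le0}*f(x)|_F\lesssim|x|^{\alpha-n-1}\norm f_{L^1(E)}.
\]

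\emph{Integrability and conclusion.} It remains to check
\[
\int_{|x|>2}|x|^{(\alpha-n-1)r}\,dx<\infty.
\]
Since $(n-\alpha)r=n$, the exponent is $(\alpha-n-1)r=-n-r$, which is strictly less than $-n$. So the integral converges, and this is exactly where cancellation is essential. Without $\int f=0$ the decay would only be $|x|^{\alpha-n}$, whose $r$-th power $|x|^{-n}$ fails to be integrable; this is the endpoint failure of $I_\alpha:L^1\to L^r$. Adding the near and far contributions gives \eqref{eq:coarse-block}.

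The only mildly delicate point is the pointwise gradient bound for $K$ away from the origin, since $\widetilde K$ is merely Lipschitz. It suffices to use the Lipschitz difference estimate directly along the segment, which stays in $\{|z|\ge|x|/2\}$; no differentiability is required.
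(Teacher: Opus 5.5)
Your proof is correct and follows the paper's argument essentially verbatim. On $|x|\le 2$ you use a bounded-kernel estimate, which the paper phrases via Young's inequality; on $|x|>2$ you use the mean-zero rewriting $\int[K(x-y)-K(x)]f(y)\,dy$ with the Lipschitz/homogeneity bound $|y|\,|x|^{\alpha-n-1}$, whose $r$th power is integrable because $(n+1-\alpha)r=n+r>n$, and your remark that the Lipschitz difference estimate suffices without differentiability of $\widetilde K$ handles the one delicate point correctly.
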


\begin{proof}
On a fixed ball, $K_{\le0}$ is bounded, so Young's inequality gives the local
estimate.  If $|x|\ge2$ and $|y|\le1/2$, then
$K_{\le0}(x-y)=K(x-y)$ and $K_{\le0}(x)=K(x)$.  Hence the mean-zero condition
gives
\[
 K_{\le0}*f(x)
 =\int_{\R^n}[K(x-y)-K(x)]f(y)\,dy.
\]
Lipschitz regularity of the angular kernel and homogeneity imply
\[
 \norm{K(x-y)-K(x)}_{\cL(E,F)}
 \lesssim |y|\,|x|^{\alpha-n-1}.
\]
Thus
$|K_{\le0}*f(x)|\lesssim\norm f_1|x|^{\alpha-n-1}$.  Since
$(n+1-\alpha)r=n+r>n$, this tail belongs to $L^r$, proving the lemma.
\end{proof}

\begin{proof}[Proof of Theorem~\ref{thm:operator-phi}]
By translation and dilation invariance we may assume
$\supp f\subset B_{1/2}$.  More explicitly, if
$f_\lambda(x)=\lambda^n f(\lambda x)$, then
\[
 \norm{f_\lambda}_{L^1(E)}=\norm f_{L^1(E)},\qquad
 K*f_\lambda(x)=\lambda^\beta(K*f)(\lambda x).
\]
Since $\beta r=n$, the integral of $\Phi(K*f_\lambda)$ is unchanged.  Thus a
translation followed by a sufficiently large dilation gives the stated
normalization.  For finite truncations write
\[
 K_{\le J}=K_{\le0}+\sum_{j=0}^{J-1}K_{j+1}.
\]
Using \eqref{eq:Phi-second-difference},
\begin{align*}
 \left|\int\Phi(K_{\le J}*f)\right|
 &\le \left|\int\Phi(K_{\le0}*f)\right|
   +\sum_{j=0}^{J-1}\left|\int\Phi(K_{j+1}*f)\right|\\
 &\quad
   +C\sum_{j=0}^{J-1}\int
   \Gamma_r\bigl(|K_{\le j}*f|,|K_{j+1}*f|\bigr).
\end{align*}
The first term is $O(\norm f_1^r)$ by
Lemma~\ref{lem:coarse-block} and \eqref{eq:Phi-growth}.  The second is
$O(\norm f_1^r)$ by Proposition~\ref{prop:pure-annuli}, and the third by
Proposition~\ref{prop:mixed-scales}.  The bound is uniform in $J$.

For smooth compactly supported $f$, the passage $J\to\infty$ is ordinary
dominated convergence.  On every fixed compact set,
\[
 |K_{\le J}*f(x)|_F
 \le \int_{\R^n}\norm{K(x-y)}_{\cL(E,F)}|f(y)|_E\,dy,
\]
and the right-hand side is locally bounded because $\alpha>0$ makes the
kernel locally integrable.  If a ball is chosen large enough to contain the
unit neighborhood of $\supp f$, then outside that ball all relevant
$|x-y|$ exceed $1$; hence, for every $J\ge0$, the high annuli vanish there and
$K_{\le J}*f=K*f$.  The mean-zero estimate in
Lemma~\ref{lem:coarse-block} gives an integrable $r$th-power tail.  Thus
\eqref{eq:Phi-growth} and dominated convergence show
\[
 \int\Phi(K_{\le J}*f)\longrightarrow\int\Phi(K*f).
\]
Letting $J\to\infty$ in the uniform estimate proves
\eqref{eq:op-phi-estimate}.
\end{proof}

\section{The projected atomic profile}\label{sec:profile}

We now apply Theorem~\ref{thm:operator-phi} to the Riesz/Hodge pair.
The Riesz potential has kernel
\begin{equation}\label{eq:riesz-kernel}
 G_\alpha(x)=c_{n,\alpha}|x|^{\alpha-n},\qquad c_{n,\alpha}\ne0,
\end{equation}
while $P I_\alpha$ has, away from the origin,
\begin{equation}\label{eq:projected-kernel}
 H_\alpha(x)=|x|^{\alpha-n}C_\alpha(x/|x|),
 \qquad C_\alpha\in C^\infty(\Sph^{n-1};\cL(V,V)).
\end{equation}
No distribution supported at $0$ can occur, because such a homogeneous
distribution has degree $-n-k$, whereas $\alpha-n\in(-n,0)$; see
\cite[Chapters~II and V]{Stein70}.  Define
\begin{equation}\label{eq:profile}
 \mathcal A(a)=\int_{\Sph^{n-1}}
 \bigl|C_\alpha(\theta)c_{n,\alpha}^{-1}a\bigr|^r\,d\bar\sigma(\theta).
\end{equation}

The key algebraic fact is the following spherical average.

\begin{lemma}[Spherical mean of the exact projection]\label{lem:average-P}
On $V=\Lambda^\ell\R^n$,
\begin{equation}\label{eq:average-P}
 \int_{\Sph^{n-1}}P(\theta)\,d\bar\sigma(\theta)
 =\frac\ell n\Id_V.
\end{equation}
\end{lemma}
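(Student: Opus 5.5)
We compute the average directly from the definition $P(\theta)=\eps_\theta\ii_\theta$ for $|\theta|=1$, reducing it to second moments of the normalized surface measure. Write $\theta=\sum_i\theta_ie_i$, so that $\eps_\theta=\sum_i\theta_i\eps_i$ and $\ii_\theta=\sum_j\theta_j\ii_j$, where $\eps_i=\eps_{e_i}$ and $\ii_j=\ii_{e_j}$. Then
\[
 P(\theta)=\sum_{i,j}\theta_i\theta_j\,\eps_i\ii_j .
\]
Since $\bar\sigma$ is rotation invariant and normalized, $\int_{\Sph^{n-1}}\theta_i\theta_j\,d\bar\sigma=\delta_{ij}/n$. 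To see this, note that the off-diagonal integrals vanish by the reflection $\theta_i\mapsto-\theta_i$. The diagonal integrals are all equal by permutation symmetry, and they sum to $\int|\theta|^2\,d\bar\sigma=1$. Hence
\[
 \int_{\Sph^{n-1}}P(\theta)\,d\bar\sigma(\theta)=\frac1n\sum_{i=1}^n\eps_i\ii_i .
\]

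It remains to identify the number operator $N=\sum_i\eps_i\ii_i$ on $\Lambda^\ell\R^n$. We test it on the standard basis $e_I=e_{i_1}\wedge\cdots\wedge e_{i_\ell}$ with $i_1<\cdots<i_\ell$. If $i\notin I$, then $\ii_ie_I=0$. If $i\in I$, moving $e_i$ to the front with its sign gives $\ii_ie_I=\pm e_{I\setminus\{i\}}$, and $\eps_i$ reinserts $e_i$ with the same sign, so $\eps_i\ii_ie_I=e_I$. Summing over $i$ gives $Ne_I=|I|\,e_I=\ell\,e_I$, so $N=\ell\,\Id_V$.

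As an alternative to the basis check, sum the Cartan identity \eqref{eq:cartan}, $\ii_i\eps_i+\eps_i\ii_i=\Id$, over $i$ to get $\sum_i\ii_i\eps_i+\sum_i\eps_i\ii_i=n\Id$. Combined with the trace count $\operatorname{tr}N=\ell\dim V$, this is consistent with the claim, but the basis computation is cleaner and suffices.

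There is no genuine obstacle here. The only points needing care are the sign bookkeeping in $\eps_i\ii_ie_I=e_I$, which holds because the same sign appears twice, and the normalization of $\bar\sigma$ as a probability measure. As a consistency check, $\ell=0$ gives $P=0$, and the complementary identity $\int P^\perp\,d\bar\sigma=\frac{n-\ell}{n}\Id$ follows from $P+P^\perp=\Id$.
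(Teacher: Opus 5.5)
Your proposal is correct and follows the paper's proof exactly: expand $P(\theta)=\sum_{i,j}\theta_i\theta_j\eps_i\ii_j$, use $\int\theta_i\theta_j\,d\bar\sigma=\delta_{ij}/n$, and check that $\sum_i\eps_i\ii_i$ acts as $\ell\,\Id$ on each basis $\ell$-blade. Your symmetry argument for the second moments and the sign check in $\eps_i\ii_ie_I=e_I$ only fill in details that the paper leaves implicit.
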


\begin{proof}
We have
\[
 P(\theta)=\sum_{i,j}\theta_i\theta_j\eps_i\ii_j,
 \qquad
 \int_{\Sph^{n-1}}\theta_i\theta_j\,d\bar\sigma=\frac{\delta_{ij}}n.
\]
Hence the average equals $n^{-1}\sum_i\eps_i\ii_i$.  On a basis
$\ell$-blade exactly $\ell$ summands act as the identity, so the sum is
$\ell\Id$.
\end{proof}

For $\ell=1$, this is the familiar identity
$P(\theta)a=\theta\langle\theta,a\rangle$ and
$\int\theta_i\theta_j\,d\bar\sigma=\delta_{ij}/n$.

\begin{lemma}[Coercivity of the profile]\label{lem:profile-coercive}
There is $c=c(n,\ell,r)>0$ such that
\begin{equation}\label{eq:profile-coercive}
 \mathcal A(a)\ge c|a|^r,\qquad a\in V.
\end{equation}
\end{lemma}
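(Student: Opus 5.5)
The plan is a compactness argument: $\mathcal A$ is continuous and positively $r$-homogeneous, so \eqref{eq:profile-coercive} reduces to showing that $\mathcal A(a)>0$ for every $a\ne0$. The positivity will come from the Fourier side, where Lemma~\ref{lem:average-P} rules out $P(\xi)a\equiv0$.

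First I record the structural properties of $\mathcal A$.

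\begin{enumerate}[label=\textup{(\roman*)},leftmargin=2.4em]
\item Since $C_\alpha\in C^\infty(\Sph^{n-1};\cL(V,V))$ and $r>1$, the integrand $|C_\alpha(\theta)c_{n,\alpha}^{-1}a|^r$ is jointly continuous in $(\theta,a)$. It is bounded on $\Sph^{n-1}\times\{|a|\le1\}$, so $\mathcal A$ is continuous on $V$.
\item $\mathcal A\ge0$, and $\mathcal A(ta)=|t|^r\mathcal A(a)$ for $t\in\R$.
\end{enumerate}

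Hence, once $\mathcal A>0$ on the unit sphere of $V$, compactness gives $c=\min_{|a|=1}\mathcal A(a)>0$. Homogeneity then yields \eqref{eq:profile-coercive}, with $c$ depending only on $n,\ell,r$ (through $\alpha=n/p$ and the fixed kernel).

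Next I show $\mathcal A(a)>0$ for $a\ne0$. Suppose instead $\mathcal A(a)=0$. By continuity of the integrand, $C_\alpha(\theta)a=0$ for every $\theta\in\Sph^{n-1}$.

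\begin{enumerate}[label=\textup{(\roman*)},leftmargin=2.4em]
\item By \eqref{eq:projected-kernel}, the $V$-valued tempered distribution $\mathcal H_a:=(\text{kernel of }PI_\alpha)\,a$ vanishes on $\R^n\setminus\{0\}$.
\item As noted after \eqref{eq:projected-kernel}, this kernel is homogeneous of degree $\alpha-n\in(-n,0)$ and carries no part supported at the origin. Indeed, a distribution supported at $0$ is a finite sum of derivatives of $\delta$, whose homogeneity degrees are $-n-k\ne\alpha-n$. Therefore $\mathcal H_a=0$ as a distribution.
\item Taking Fourier transforms, $\widehat{\mathcal H_a}(\xi)=|\xi|^{-\alpha}P(\xi)a$ as a locally integrable function, since $\alpha<n$. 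So $P(\xi)a=0$ for all $\xi\ne0$, and in particular $P(\theta)a=0$ for all $\theta\in\Sph^{n-1}$.
\item Averaging over the sphere and applying Lemma~\ref{lem:average-P} gives $\tfrac{\ell}{n}a=0$. Since $\ell\ge1$, this forces $a=0$, a contradiction.
\end{enumerate}

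The main obstacle is step (iii), identifying the Fourier transform of the homogeneous function $|x|^{\alpha-n}C_\alpha(x/|x|)$ with the multiplier $P(\xi)|\xi|^{-\alpha}$. This identification is exactly the definition of $C_\alpha$ together with the no-delta remark already made after \eqref{eq:projected-kernel}, so it is available. The remaining point is only that a degree-$(\alpha-n)$ homogeneous distribution vanishing off the origin is zero, which is the same degree count used there.

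If a quantitative constant were wanted instead, I would pair $C_\alpha(\theta)a$ against $a$. One could express $\int_{\Sph^{n-1}}C_\alpha\,d\bar\sigma$-type moments via Funk–Hecke in terms of $\int P\,d\bar\sigma=\frac{\ell}{n}\Id$. However, the compactness route above suffices for the lemma as stated.
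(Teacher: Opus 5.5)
Your proposal is correct and follows the paper's proof essentially step for step. Vanishing of $\mathcal A(a)$ forces $C_\alpha(\theta)a\equiv0$; the degree count $\alpha-n\ne-n-k$ rules out a part supported at the origin; the Fourier transform then gives $P(\xi)a=0$; Lemma~\ref{lem:average-P} yields $a=0$; and compactness together with $r$-homogeneity produces the constant $c$.
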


\begin{proof}
If $\mathcal A(a)=0$, continuity gives $C_\alpha(\theta)a=0$ for every
$\theta$.  Hence the homogeneous kernel of $P I_\alpha$ applied to $a$
vanishes away from $0$, and therefore identically.  Fourier transform gives
$P(\xi)a=0$ for all $\xi\ne0$.  Averaging and using
Lemma~\ref{lem:average-P} yields $(\ell/n)a=0$.  Thus $\mathcal A$ is positive
on the unit sphere; compactness and $r$-homogeneity give
\eqref{eq:profile-coercive}.
\end{proof}

Define
\begin{equation}\label{eq:Phi-profile}
 \Phi(u,v)=\mathcal A(u)-|v|^r
\end{equation}
on $V\oplus V$, and
\begin{equation}\label{eq:paired-kernel}
 \mathcal K_\alpha(x)a=(G_\alpha(x)a,H_\alpha(x)a).
\end{equation}
Its angular part is
$\widetilde{\mathcal K}_\alpha(\theta)a
=(c_{n,\alpha}a,C_\alpha(\theta)a)$, and the definition of
$\mathcal A$ gives, for every $a\in V$,
\[
 \int_{\Sph^{n-1}}
 \Phi(\widetilde{\mathcal K}_\alpha(\theta)a)\,d\bar\sigma(\theta)=0.
\]
Since $\Phi$ is even, the same holds with the negative kernel.  Applying
Theorem~\ref{thm:operator-phi} to a smooth compactly supported
mean-zero $\omega$ gives
\begin{equation}\label{eq:profile-integral}
 \left|\int_{\R^n}
 \bigl(\mathcal A(I_\alpha\omega)-|P I_\alpha\omega|^r\bigr)\right|
 \lesssim \norm\omega_{L^1}^r.
\end{equation}
By Lemma~\ref{lem:profile-coercive},
\begin{equation}\label{eq:smooth-graph-r}
 \norm{I_\alpha\omega}_{L^r}^r
 \lesssim
 \norm{P I_\alpha\omega}_{L^r}^r+\norm\omega_{L^1}^r.
\end{equation}
This is the desired graph estimate for smooth data.

\section{Closure on the endpoint graph domain}\label{sec:closure}

The passage from \eqref{eq:smooth-graph-r} to arbitrary graph data deserves a
separate argument.  The obstruction is familiar: at the critical exponent
$r=n/(n-\alpha)$ the Riesz potential is only of weak type $(1,r)$ in general,
not strongly bounded $L^1\to L^r$; see \cite[Chapter~V]{Stein70}.  Ordinary
$L^1$ approximation therefore does not preserve the unknown full-potential
term.

\subsection{A quantitative mean-zero estimate}

\begin{lemma}[Mean-zero Schwartz estimate]\label{lem:mean-zero-schwartz}
Let $g\in\cS(\R^n;V)$ satisfy $\int g=0$, and let $N>n+1$.  Then
\begin{equation}\label{eq:mean-zero-schwartz}
 \norm{I_\alpha g}_{L^r}
 \lesssim
 \norm{g}_{L^1}+\norm{g}_{L^\infty}
 +\int_{\R^n}|y|\,|g(y)|\,dy
 +\sup_y(1+|y|)^N|g(y)|.
\end{equation}
\end{lemma}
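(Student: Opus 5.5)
We estimate $I_\alpha g=G_\alpha*g$ pointwise, splitting $\R^n$ into the near region $|x|\le 2$ and the far region $|x|>2$. The estimate is linear in $g$, so no nonlinear machinery is needed. The mean-zero hypothesis enters only in the far region, through the Taylor-type cancellation already used in Lemma~\ref{lem:coarse-block}.

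\emph{Near region.} For $|x|\le2$, split the convolution at $|x-y|\le1$ and $|x-y|>1$. On $|x-y|\le1$ we have
\[
 \int_{|x-y|\le1}|x-y|^{\alpha-n}|g(y)|\,dy\lesssim\norm g_{L^\infty},
\]
since $\alpha>0$ makes $|z|^{\alpha-n}$ locally integrable. On $|x-y|>1$ the kernel is bounded by $1$, so this part is at most $\norm g_{L^1}$. Hence $|I_\alpha g(x)|\lesssim\norm g_{L^\infty}+\norm g_{L^1}$ on the ball $B_2$, which has finite measure. This gives an $L^r(B_2)$ bound by the first two terms on the right of \eqref{eq:mean-zero-schwartz}.

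\emph{Far region.} For $|x|>2$, use $\int g=0$ to write
\[
 I_\alpha g(x)=c_{n,\alpha}\int\bigl(|x-y|^{\alpha-n}-|x|^{\alpha-n}\bigr)g(y)\,dy,
\]
and split the $y$-integral at $|y|\le|x|/2$ and $|y|>|x|/2$.

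\begin{itemize}
\item On $|y|\le|x|/2$, the mean value theorem gives $\bigl||x-y|^{\alpha-n}-|x|^{\alpha-n}\bigr|\lesssim|y||x|^{\alpha-n-1}$. This piece is therefore at most $|x|^{\alpha-n-1}\int|y||g(y)|\,dy$. Since $(n+1-\alpha)r=n+r>n$, this lies in $L^r(|x|>2)$, exactly as in Lemma~\ref{lem:coarse-block}.
\item On $|y|>|x|/2$, let $S=\sup_y(1+|y|)^N|g(y)|$. The $|x|^{\alpha-n}$ term is bounded by $|x|^{\alpha-n}\int_{|y|>|x|/2}|g|\lesssim|x|^{\alpha-n}S|x|^{n-N}=S|x|^{\alpha-N}$. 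For the $|x-y|^{\alpha-n}$ term, use $|g(y)|\lesssim S|x|^{-N}$ on this region. Splitting again at $|x-y|\le|x|$ and $|x-y|>|x|$, the piece $|x-y|\le|x|$ contributes $\lesssim S|x|^{-N}|x|^{\alpha}$. The piece $|x-y|>|x|$ contributes at most $|x|^{\alpha-n}\int_{|y|>|x|/2}|g|\lesssim S|x|^{\alpha-N}$.
\end{itemize}

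In both cases the far tail from $|y|>|x|/2$ is $\lesssim S|x|^{\alpha-N}$. Since $N>n+1>\alpha+n/r$, we get $(N-\alpha)r>n$, so this tail also lies in $L^r(|x|>2)$.

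Summing the near and far contributions gives \eqref{eq:mean-zero-schwartz}. The only delicate point is the bookkeeping of the region $|y|>|x|/2$, where the difference kernel has no cancellation. There the polynomial decay weight $S$ must replace the first moment. The stated hypothesis $N>n+1$ is more than enough for this.
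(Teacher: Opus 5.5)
Your proposal is correct and follows essentially the same route as the paper: a near/far split at $|x|=2$, the mean-zero rewriting with a mean-value bound on $|y|\le|x|/2$, and polynomial decay of $g$ on $|y|>|x|/2$. The only difference is cosmetic: where both kernels are $O(|x|^{\alpha-n})$, the paper uses the first moment via $\int_{|y|>|x|/2}|g|\le\frac{2}{|x|}\int|y|\,|g|$, whereas you use the weighted seminorm $\sup_y(1+|y|)^N|g(y)|$ throughout that region, which is equally valid since only $N>n$ is needed there.
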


\begin{proof}
For $|x|\le2$, split the Riesz integral into $|x-y|<1$ and its complement.
The kernel $|x-y|^{\alpha-n}$ is locally integrable, while on the complement
it is bounded.  This part is controlled by
$\norm g_\infty+\norm g_1$.

For $|x|>2$, the zero integral gives
\begin{equation}\label{eq:mean-zero-tail}
 I_\alpha g(x)
 =c_{n,\alpha}\int_{\R^n}
 \bigl(|x-y|^{\alpha-n}-|x|^{\alpha-n}\bigr)g(y)\,dy.
\end{equation}
On $|y|\le|x|/2$, the mean-value theorem bounds the kernel difference by
$C|x|^{\alpha-n-1}|y|$.  On $|y|>|x|/2$, split into
$|x-y|<|x|/2$ and its complement.  In the first region $|y|\simeq|x|$, so the
Schwartz seminorm contributes $C|x|^{\alpha-N}$ after integration.  In the
second region both kernels are $O(|x|^{\alpha-n})$, and
\[
 \int_{|y|>|x|/2}|g(y)|\,dy
 \le \frac2{|x|}\int|y|\,|g(y)|\,dy.
\]
Thus the tail is bounded by a constant times
$|x|^{\alpha-n-1}+|x|^{\alpha-N}$ multiplied by the quantities on the right of
\eqref{eq:mean-zero-schwartz}.  Since
$(n+1-\alpha)r=n+r>n$ and $N>n+1$, both powers belong to
$L^r(\{|x|>2\})$.
\end{proof}

\subsection{Annular graph density}
Let $q\in C_c^\infty(\R^n\setminus\{0\})$ and let $q(D)$ be the corresponding
Fourier multiplier.

\begin{lemma}[Annular graph density]\label{lem:annular-density}
For every $f\in L^1(\R^n;V)$, the form $q(D)f$ lies in the closure of
\[
 \{h\in C_c^\infty(\R^n;V):\int h=0\}
\]
for the norm
\begin{equation}\label{eq:graph-density-norm}
 \norm h_{L^1}+\norm{I_\alpha h}_{L^r}
 +\norm{P I_\alpha h}_{L^r}.
\end{equation}
\end{lemma}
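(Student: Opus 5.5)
The plan is to approximate $f$ in $L^1$ by smooth functions, use the fact that $q(D)$ maps $L^1$ boundedly into the graph norm \eqref{eq:graph-density-norm}, and then approximate each resulting Schwartz form by compactly supported mean-zero cutoffs, measuring the error with Lemma~\ref{lem:mean-zero-schwartz}.

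\emph{Step 1: continuity of $q(D)$ into the graph norm.} Each of the three operators
\[
 q(D),\qquad I_\alpha q(D)=\bigl(|\xi|^{-\alpha}q\bigr)(D),\qquad P I_\alpha q(D)=\bigl(P(\xi)|\xi|^{-\alpha}q\bigr)(D)
\]
has a symbol in $C_c^\infty(\R^n\setminus\{0\})$. Indeed, $P(\xi)$ is smooth away from the origin, and $q$ kills a neighbourhood of $0$. Hence each operator is convolution with a Schwartz kernel. By Young's inequality each therefore maps $L^1$ boundedly into $L^1\cap L^r$, so
\[
 \norm{q(D)f}_{L^1}+\norm{I_\alpha q(D)f}_{L^r}+\norm{P I_\alpha q(D)f}_{L^r}\lesssim_q \norm f_{L^1}.
\]
Choose $\varphi_k\in C_c^\infty(\R^n;V)$ with $\varphi_k\to f$ in $L^1$. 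Then $q(D)\varphi_k\to q(D)f$ in the norm \eqref{eq:graph-density-norm}. It therefore suffices to show that each $g=q(D)\varphi_k$ lies in the stated closure. Such a $g$ is a Schwartz form with $\hat g=q\hat\varphi_k$ vanishing near the origin, so in particular $\int g=\hat g(0)=0$.

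\emph{Step 2: truncation with a mean correction.} Fix $\chi\in C_c^\infty$ with $\chi=1$ on $B_1$, and fix $\psi\in C_c^\infty$ with $\int\psi=1$. Set
\[
 h_R=\chi(\cdot/R)\,g-c_R\psi,\qquad c_R=\int\chi(\cdot/R)\,g=-\int\bigl(1-\chi(\cdot/R)\bigr)g .
\]
Then $h_R\in C_c^\infty$ and $\int h_R=0$. The difference
\[
 g-h_R=\bigl(1-\chi(\cdot/R)\bigr)g+c_R\psi
\]
is Schwartz with zero integral, so Lemma~\ref{lem:mean-zero-schwartz} applies to it.

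\emph{Step 3: the error tends to zero.} Since $g$ is Schwartz, the quantities
\[
 \int_{|y|>R}(1+|y|)|g|,\qquad \sup_{|y|>R}|g|,\qquad \sup_{|y|>R}(1+|y|)^N|g|
\]
all tend to $0$. Also $|c_R|\le\int_{|y|>R}|g|\to0$, and $\psi$ contributes fixed finite seminorms. Hence every term on the right of \eqref{eq:mean-zero-schwartz} for $g-h_R$ tends to $0$, which gives $\norm{I_\alpha(g-h_R)}_{L^r}\to0$. The $L^1$ term of the graph norm also tends to $0$. For the projected term, $P$ is a zeroth-order homogeneous smooth multiplier, hence bounded on $L^r$ with $1<r<\infty$ by Mikhlin's theorem. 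Therefore
\[
 \norm{P I_\alpha(g-h_R)}_{L^r}\lesssim\norm{I_\alpha(g-h_R)}_{L^r}\to0 .
\]

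\emph{Expected obstacle.} The point needing care is that one must never estimate $I_\alpha$ on a general $L^1$ function, since $I_\alpha$ is not strongly bounded $L^1\to L^r$. This is avoided by routing all $L^1$ approximation through $q(D)$ in Step 1. The only remaining full-potential estimates are for Schwartz mean-zero differences, where Lemma~\ref{lem:mean-zero-schwartz} is exactly tailored. I would also double-check that $I_\alpha q(D)$ on $L^1$ agrees with the homogeneous realization in $\cS_\infty'$. This holds because the symbol is supported away from $0$.
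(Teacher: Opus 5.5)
Your proposal is correct and follows the paper's proof essentially step for step: you approximate $f$ in $L^1$ by $C_c^\infty$ forms, push this through the Schwartz kernels of $q(D)$ and $I_\alpha q(D)$ via Young's inequality, and then replace each $q(D)\varphi_k$ by the cutoff with mean correction $\eta_R h-\bigl(\int\eta_R h\bigr)\psi$, whose error is controlled by Lemma~\ref{lem:mean-zero-schwartz}. The only cosmetic difference is that in Step~1 you treat $P I_\alpha q(D)$ directly as a Schwartz-kernel operator, whereas the paper invokes the $L^r$-boundedness of $P$ at that point; both work.
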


\begin{proof}
Choose $f_k\in C_c^\infty(\R^n;V)$ with $f_k\to f$ in $L^1$.  The multipliers
$q(D)$ and $I_\alpha q(D)$ have Schwartz kernels because their symbols are
smooth and compactly supported away from the origin.  Young's inequality gives
\[
 \norm{q(D)(f_k-f)}_{L^1}\to0,
 \qquad
 \norm{I_\alpha q(D)(f_k-f)}_{L^r}\to0.
\]
The zero-order multiplier $P$ is bounded on $L^r$, so the projected term also
converges.

It remains to approximate $h=q(D)f_k$.  Its Fourier transform vanishes near
$0$, hence $h\in\cS_0$ and $\int h=0$.  Let $\eta\in C_c^\infty$ equal $1$ on
the unit ball, set $\eta_R(x)=\eta(x/R)$, and choose
$\psi\in C_c^\infty$ with $\int\psi=1$.  Define
\begin{equation}\label{eq:cutoff-correction}
 h_R=\eta_R h-\left(\int\eta_Rh\right)\psi.
\end{equation}
Then $h_R$ is smooth, compactly supported, and mean zero.  Since
$\int h=0$, the correction coefficient is
$-\int(1-\eta_R)h$ and decays faster than any negative power of $R$.
Consequently $h_R\to h$ in $\cS$ and in $L^1$.  Applying
Lemma~\ref{lem:mean-zero-schwartz} to $h-h_R$ gives
$I_\alpha(h-h_R)\to0$ in $L^r$, and $L^r$ boundedness of $P$ gives the
projected convergence.
\end{proof}

\subsection{Removing the annular cutoff}
Choose a radial $\chi\in C_c^\infty(\R^n)$ with $\chi=1$ near $0$ and, for
$N\ge1$, set
\begin{equation}\label{eq:QN}
 Q_N=\chi(2^{-N}D)-\chi(2^N D).
\end{equation}
The symbol of $Q_N$ is smooth, compactly supported, and vanishes near the
origin.  The operators $Q_N$ commute with $P$ and $I_\alpha$, are uniformly
bounded on $L^1$ and $L^r$, converge strongly to the identity on $L^r$, and
converge to the identity on homogeneous distributions when tested against
$\cS_0$.  Since the symbol of $Q_N$ vanishes near the origin, its convolution
kernel has integral zero; hence $Q_N\omega\in L^1_0$ whenever
$\omega\in L^1$.

Let $\omega\in L^1_0(\R^n;V)$ and assume
$P I_\alpha\omega\in L^r$.  For each fixed $N$, apply
Lemma~\ref{lem:annular-density} to the annular multiplier $Q_N$ and the input
$\omega$.  The smooth graph estimate \eqref{eq:smooth-graph-r} therefore
passes to $Q_N\omega$ in the graph norm.  Hence
\begin{align}\label{eq:QN-bound}
 \norm{Q_N I_\alpha\omega}_{L^r}^r
 &\lesssim
 \norm{Q_N P I_\alpha\omega}_{L^r}^r
 +\norm{Q_N\omega}_{L^1}^r\notag\\
 &\lesssim
 \norm{P I_\alpha\omega}_{L^r}^r+\norm{\omega}_{L^1}^r,
\end{align}
with a constant independent of $N$.

Since $1<r<\infty$, $L^r$ is reflexive.  Take a subsequence realizing the
lower limit of the left side and then a weakly convergent further subsequence.
Testing against $\cS_0$, distributional convergence of
$Q_N I_\alpha\omega$ identifies the weak limit with the same homogeneous
distribution class as $I_\alpha\omega$.  The weak limit belongs to $L^r$;
any polynomial ambiguity is therefore zero.  Weak lower semicontinuity in
\eqref{eq:QN-bound} gives
\begin{equation}\label{eq:full-graph-r}
 \norm{I_\alpha\omega}_{L^r}^r
 \lesssim
 \norm{P I_\alpha\omega}_{L^r}^r+\norm{\omega}_{L^1}^r.
\end{equation}
Taking $r$th roots proves \eqref{eq:graph}.

\section{Duality and bounded representatives}\label{sec:duality}

We now convert \eqref{eq:graph} into the selection theorem.  The only
low-frequency fact needed is that the intersection $X\cap L^1$ is
automatically mean zero.

\begin{lemma}\label{lem:critical-mean-zero}
If $f\in X\cap L^1(\R^n)$, then $\int_{\R^n}f=0$.
\end{lemma}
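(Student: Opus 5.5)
Write $\hat f$ for the Fourier transform of $f$. Since $f\in L^1$, $\hat f$ is continuous and $\hat f(0)=\int f$. Suppose $a=\int f\neq 0$. Then $|\hat f(\xi)|\ge |a|/2$ on some ball $|\xi|<\delta$.

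The claim is that a function with $|D|^{-\alpha}f\in L^r$ cannot have a Fourier transform that is nonvanishing near $0$. The obstruction is that $|\xi|^{-\alpha}$ fails to lie in the relevant space near the origin, since $\alpha r' = \alpha p = n$ exactly.

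\textbf{Step 1: a localized test function.} Membership $f\in X=\dot W^{-\alpha,r}$ means that $g:=I_\alpha f$ (as an element of $\cS_\infty'$) has a representative in $L^r$, with $\hat g=|\xi|^{-\alpha}\hat f$ away from $0$. Fix a bump $\phi\in C_c^\infty$ with support in the annulus $\{1/2<|\xi|<2\}$, and put $\phi_k(\xi)=\phi(2^{k}\xi)$ for large $k$, so that $\phi_k$ lives where $|\xi|\simeq 2^{-k}<\delta$. Let $\check\phi_k\in\cS_0$ denote its inverse Fourier transform.

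\textbf{Step 2: lower bound on the pairing.} Pair $g$ against the test functions $\psi_k=\check{(\phi_k\,\overline{\hat f}/|\hat f|)}$. To keep these Schwartz, I would instead use $\check\phi_k$ together with the near-constancy of $\hat f$: by continuity, $\hat f=a+o(1)$ on $\supp\phi_k$. This gives
\[
|\langle g,\check\phi_k\rangle|=\Bigl|\int |\xi|^{-\alpha}\hat f(\xi)\phi_k(\xi)\,d\xi\Bigr|\gtrsim |a|\,2^{k\alpha}2^{-kn},
\]
taking $\phi\ge0$ with $\int\phi>0$.

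\textbf{Step 3: upper bound and sum over scales.} Hölder gives $|\langle g,\check\phi_k\rangle|\le\norm g_{L^r}\norm{\check\phi_k}_{L^{p}}$. But $\norm{\check\phi_k}_{L^p}=2^{-kn}2^{kn/p}\norm{\check\phi}_p=2^{-k(n-\alpha)}\cdot C$. This matches the lower bound exactly and yields no contradiction at a single scale, as expected at criticality. The remedy is to sum over many scales: take $\Psi_K=\sum_{k=k_0}^{K}\check\phi_k\,2^{k(n-\alpha)}$. The lower bound then grows like $|a|(K-k_0)$. For the upper bound, $\Psi_K$ is a Littlewood--Paley sum of $L^p$-normalized pieces with disjoint-ish frequencies, so the square function estimate gives $\norm{\Psi_K}_{L^p}\lesssim \norm{(\sum_k |2^{kn/p'}\check\phi(2^{-k}\cdot)2^{-kn}|^2)^{1/2}}_p$. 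This grows only like $(K-k_0)^{1/2}$ when $p\le 2$. More robustly, use $\norm{\Psi_K}_{\dot W^{\alpha,p}}$ duality: $\langle g,\Psi_K\rangle=\langle |D|^{-\alpha}f, \Psi_K\rangle$, and $\norm{\Psi_K}_{L^p}\lesssim (K-k_0)^{\max(1/p,1/2)}$, which is $o(K-k_0)$ since $p>1$. The contradiction $|a|(K-k_0)\lesssim \norm g_r (K-k_0)^{\max(1/p,1/2)}$ then forces $a=0$.

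The main obstacle is Step 3: justifying the sublinear growth of $\norm{\Psi_K}_{L^p}$ and controlling the $o(1)$ error in Step 2 uniformly in $k$. I would first try Littlewood--Paley/Triebel--Lizorkin ($\dot F^0_{p,2}$) bounds with a $\max(p,2)$ interpolation. The error is handled by taking $k_0$ large enough that $|\hat f-a|<|a|/2$ on $|\xi|<2^{1-k_0}$.
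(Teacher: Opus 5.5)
Your argument is correct, but it takes a different route from the paper's. The paper works at a single scale and never uses $\hat f$. For $\kappa\in\cS_0$ it evaluates $2^{-j\alpha}\norm{\kappa_j*f}_{L^r}$ in two ways. First, it equals $\norm{(|D|^\alpha\kappa)_j*g}_{L^r}$ with $g=I_\alpha f$, and this tends to $0$ as $j\to-\infty$ because $g\in L^r$ with $r<\infty$; the input here is the qualitative decay of low-frequency pieces of an $L^r$ function, not a norm bound. Second, after rescaling with $\alpha=n(1-1/r)$, it equals $\norm{\int\kappa(\cdot-2^jy)f(y)\,dy}_{L^r}$, which tends to $\norm{\kappa}_{L^r}|\int f|$.

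You instead pair with test functions on the Fourier side and use continuity of $\hat f$ at $0$. Since you only use the scale-invariant H\"older bound, you rightly note that one scale gives nothing. You compensate by summing $N=K-k_0+1$ scales. The pairing grows like $N|a|$ with no cancellation, because every main term carries the same factor $a$. Meanwhile $\Psi_K$ behaves like $|x|^{-n/p}$ on $N$ dyadic shells, so $\norm{\Psi_K}_{L^p}=O(N^{1/p})$. This makes the logarithmic obstruction explicit and quantitative, at the cost of an extra estimate.

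Two corrections to Step~3:
\begin{enumerate}[label=\textup{(\roman*)},leftmargin=2.4em]
\item The growth $(K-k_0)^{1/2}$ you assert for $p\le2$ fails for $p<2$. The inclusion $\ell^p\subset\ell^2$ gives $(K-k_0)^{1/p}$, which is the true order for every $p$. The exponent $\max\{1/p,1/2\}<1$ you finally use is still right, so the conclusion stands.
\item No square-function theory is needed. The pointwise bound $|\Psi_K(x)|\lesssim\min\{2^{-k_0\alpha},|x|^{-\alpha}\}$ for $|x|\le2^K$, with rapid decay beyond, already gives $\norm{\Psi_K}_{L^p}^p\lesssim N$.
\end{enumerate}

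Finally, the sum over scales can be dropped entirely. The functions $\psi_k=2^{k(n-\alpha)}\check\phi_k$ are bounded in $L^p$ with $\norm{\psi_k}_{L^\infty}\to0$. Hence $\psi_k\rightharpoonup0$ weakly in $L^p$, so $\pair{g}{\psi_k}\to0$. By continuity of $\hat f$, the same pairing tends to a nonzero multiple of $a$, forcing $a=0$. This is exactly the dual form of the paper's single-scale proof.
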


\begin{proof}
Choose nonzero $\kappa\in\cS_0$ and put
$\Delta_jf=\kappa_j*f$, $\kappa_j(x)=2^{jn}\kappa(2^jx)$.
Writing $f=|D|^\alpha g$ with $g=I_\alpha f\in L^r$, annular multiplier
scaling gives
\begin{equation}\label{eq:low-frequency-vanish}
 2^{-j\alpha}\norm{\Delta_jf}_{L^r}\longrightarrow0
 \qquad(j\to-\infty).
\end{equation}
On the other hand, since $\alpha=n(1-1/r)$,
\begin{equation}\label{eq:low-frequency-mass}
 2^{-j\alpha}\norm{\Delta_jf}_{L^r}
 =\left\|\int_{\R^n}\kappa(\,\cdot-2^jy\,)f(y)\,dy\right\|_{L^r}.
\end{equation}
Translation continuity in $L^r$ and dominated convergence show that the
right side tends to $\norm\kappa_{L^r}|\int f|$.  Comparison with
\eqref{eq:low-frequency-vanish} proves the claim.
\end{proof}

Hence $X$ and $L^1_0$ are compatible homogeneous distribution spaces.  Their
sum has dual
\begin{equation}\label{eq:sum-dual}
 (X+L^1_0)'=Y\cap(L^\infty/\R)
\end{equation}
with equivalent norms: a functional on the sum restricts to elements of
$X'=Y$ and $(L^1_0)'=L^\infty/\R$, and agreement on $\cS_0$ identifies the
two representatives; the converse follows from the infimum norm.

\begin{proposition}[Graph estimate implies bounded selection]\label{prop:graph-to-selection}
Assume \eqref{eq:graph}.  Then every $v\in Y\Lambda^\ell$ admits
$u\in(Y\cap L^\infty)\Lambda^\ell$ such that
\[
 du=dv,\qquad \norm u_{Y\cap L^\infty}\lesssim\norm v_Y.
\]
\end{proposition}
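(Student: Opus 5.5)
The argument is a Hahn--Banach duality that converts the graph estimate \eqref{eq:graph} into bounded selection. Fix $v\in Y\Lambda^\ell$. Since $u=v+w$ with $dw=0$ is required, write the candidate as $u=v-w$ with $w$ in the kernel of $d$, and recall that $\ker d=\operatorname{ran}P$ on $Y\Lambda^\ell$ (exactness, since $P^\perp=d^*d|D|^{-2}$). The goal is $w\in Y\Lambda^\ell$ with $Pw=w$ and $v-w\in L^\infty/\R$, with norm control; a constant of each coefficient can then be absorbed as in the remark after \eqref{eq:Linfty-mod-constants}. Equivalently, we want $P^\perp v\in P^\perp[(Y\cap L^\infty/\R)\Lambda^\ell]$ with norm control, because $u=P^\perp v+(\text{closed part})$ and $du=dv$ exactly when $P^\perp u=P^\perp v$.

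\emph{Step 1 (dual formulation).} Set $Z=(Y\cap L^\infty/\R)\Lambda^\ell$, which by \eqref{eq:sum-dual} is the dual of $W=(X+L^1_0)\Lambda^\ell$. Consider the map $T:Z\to Y\Lambda^\ell$, $Tu=P^\perp u$. We need $T$ to be onto $P^\perp[Y\Lambda^\ell]$ with open-mapping constant. By the closed range theorem, or more precisely the standard criterion that a weak$^*$-continuous adjoint $T=S^*$ is onto with bound $C$ if and only if $\norm{\phi}\le C\norm{S\phi}$, it suffices to find the predual operator. Take $S:P^\perp[X\Lambda^\ell]\to W$ to be the inclusion (composed with $P^\perp$). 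Then for $\omega\in X\Lambda^\ell$ with $P\omega=0$ one needs
\[
 \norm{\omega}_X\lesssim\norm{\omega}_{X+L^1_0}.
\]

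\emph{Step 2 (derive this from \eqref{eq:graph}).} Write $\omega=\omega_1+\omega_2$ with $\omega_1\in X$ and $\omega_2\in L^1_0$, nearly optimal. Then $\omega_2=\omega-\omega_1\in X\cap L^1_0$, which is mean zero by Lemma~\ref{lem:critical-mean-zero}. Moreover $P\omega_2=-P\omega_1$, so $\norm{PI_\alpha\omega_2}_{L^r}\lesssim\norm{\omega_1}_X$ by $L^r$-boundedness of $P$. Applying \eqref{eq:graph} to $\omega_2$ gives $\norm{\omega_2}_X\lesssim\norm{\omega_2}_{L^1}+\norm{\omega_1}_X$, and hence $\norm\omega_X\lesssim\norm{\omega_1}_X+\norm{\omega_2}_{L^1}$.

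\emph{Step 3 (Hahn--Banach).} For $v$ given, the functional $\omega\mapsto\pair{v}{\omega}$ on the subspace $P^\perp X\Lambda^\ell\subset W$ has norm $\lesssim\norm v_Y$ for the $W$-norm, by Step 2. Extend it by Hahn--Banach to $u\in W'=Z$ with $\norm u_Z\lesssim\norm v_Y$. Then $\pair{u-v}{\omega}=0$ for all $\omega\in P^\perp X$, so $P^\perp(u-v)=0$, i.e.\ $d(u-v)=0$ after testing against $\cS_0$.

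The main obstacle is Step 3's identification of the dual and of the annihilator. One must check that $P^\perp X\Lambda^\ell$ pairs correctly with $Y$, that $P^\perp$ is self-adjoint on the $\cS_0$ pairing, and that vanishing against $P^\perp\cS_0$ forces $du=dv$ in $\cS_\infty'$. I would verify this via density of $\cS_0$ and the multiplier form \eqref{eq:hodge-multiplier}.
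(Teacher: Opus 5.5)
Your proposal is correct and follows the paper's proof essentially step for step. Both arguments run as follows:
\begin{enumerate}
\item Obtain the coexact estimate $\|\omega\|_X\lesssim\|\omega\|_{X+L^1_0}$ by applying the graph estimate to the $L^1_0$ piece.
\item Extend $\omega\mapsto\langle\omega,v\rangle$ by Hahn--Banach from the coexact part of $X\Lambda^\ell$.
\item Represent the extension through \eqref{eq:sum-dual}.
\item Test against $P^\perp\varphi$, $\varphi\in\cS_0$, to get $P^\perp(u-v)=0$ and hence $d(u-v)=0$.
\end{enumerate}
The differences are cosmetic. You prove the coexact estimate only for $\omega\in X$, which is all the Hahn--Banach step uses; the paper additionally shows that coexact elements of $X+L^1_0$ lie in $X$. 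Your appeal to Lemma~\ref{lem:critical-mean-zero} is redundant, since $\omega_2\in L^1_0$ by choice. Finally, the closed-range framing in your Step~1 is never actually needed.
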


\begin{proof}
We first extend the graph estimate to the coexact part of
$X+L^1_0$.  Let $\eta\in(X+L^1_0)\Lambda^\ell$ satisfy
$P^\perp\eta=\eta$, and write $\eta=\eta_0+\eta_1$ with
$\eta_0\in X$, $\eta_1\in L^1_0$.  Since $P\eta=0$,
$P\eta_1=-P\eta_0\in X$.  Applying \eqref{eq:graph}, in the equivalent
$X$-norm \eqref{eq:X-Riesz}, to $\eta_1$ gives
\[
 \norm{\eta_1}_X
 \lesssim \norm{\eta_1}_{L^1}+\norm{P\eta_0}_X
 \lesssim \norm{\eta_1}_{L^1}+\norm{\eta_0}_X.
\]
Thus $\eta\in X$ and, after taking the infimum over all decompositions,
\begin{equation}\label{eq:coexact-sum-estimate}
 \norm\eta_X\lesssim\norm\eta_{X+L^1_0}.
\end{equation}

Let
$\mathcal E=\{\eta\in X\Lambda^\ell:P^\perp\eta=\eta\}$.
For fixed $v\in Y\Lambda^\ell$, the functional
$L_v(\eta)=\pair{\eta}{v}$ on $\mathcal E$ satisfies, by
\eqref{eq:coexact-sum-estimate},
\[
 |L_v(\eta)|
 \le\norm\eta_X\norm v_Y
 \lesssim\norm\eta_{X+L^1_0}\norm v_Y.
\]
Hahn--Banach extends it to $(X+L^1_0)\Lambda^\ell$ with the same bound.
By \eqref{eq:sum-dual}, the extension is represented by some
$u\in(Y\cap L^\infty)\Lambda^\ell$ with
$\norm u_{Y\cap L^\infty}\lesssim\norm v_Y$.

For every $\varphi\in\cS_0\Lambda^\ell$, $P^\perp\varphi\in\mathcal E$, so
\[
 \pair{\varphi}{P^\perp(v-u)}
 =\pair{P^\perp\varphi}{v-u}=0.
\]
Hence $P^\perp(v-u)=0$.  From \eqref{eq:cartan},
\[
 |\eps_\xi w|^2
 =|\xi|^2\pair{P^\perp(\xi)w}{w},
\]
so $P^\perp(\xi)w=0$ implies $\eps_\xi w=0$.  Therefore
$d(v-u)=0$, i.e. $du=dv$.
\end{proof}

\begin{proof}[Proof of Theorem~\ref{thm:main}]
The full graph estimate \eqref{eq:full-graph-r} proves \eqref{eq:graph};
Proposition~\ref{prop:graph-to-selection} gives a homogeneous bounded class
$u$ with $du=dv$ and
\[
 \norm u_Y+\norm u_{L^\infty/\R}\lesssim\norm v_Y.
\]
Choose a constant representative as after
\eqref{eq:Linfty-mod-constants}.  This preserves $du=dv$ and gives
\eqref{eq:main-estimate}.  Equality of the image spaces and equivalence of
their quotient norms follow immediately.
\end{proof}

\section{A consequence}\label{sec:consequences}

\begin{corollary}[Bounded-potential Hodge decomposition]\label{cor:bounded-hodge}
Let $n\ge4$, $2\le\ell\le n-2$, and $1<p<\infty$.  Then
\begin{equation}\label{eq:bounded-hodge}
\begin{aligned}
 \dot W^{n/p-1,p}(\R^n;\Lambda^\ell)
 &={}d\bigl[(\dot W^{n/p,p}\cap L^\infty)(\R^n;\Lambda^{\ell-1})\bigr]\\
 &\quad\oplus
 d^*\bigl[(\dot W^{n/p,p}\cap L^\infty)(\R^n;\Lambda^{\ell+1})\bigr]
\end{aligned}
\end{equation}
with equivalent induced norms.
\end{corollary}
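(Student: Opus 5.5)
We will derive the corollary from the exact Hodge decomposition at the level of $\dot W^{n/p-1,p}$, followed by Theorem~\ref{thm:main} applied twice: once to $(\ell-1)$-forms and once, after Hodge star duality, to $(\ell+1)$-forms. Let $w\in\dot W^{n/p-1,p}(\R^n;\Lambda^\ell)$. By \eqref{eq:hodge-multiplier}, $w=Pw+P^\perp w$, and both summands lie in $\dot W^{n/p-1,p}$ because $P,P^\perp$ are zero-order multipliers bounded on $L^p$ and they commute with $|D|^{n/p-1}$. Put
\[
 v_-=d^*|D|^{-2}w\in\dot W^{n/p,p}(\R^n;\Lambda^{\ell-1}),\qquad
 v_+=d|D|^{-2}w\in\dot W^{n/p,p}(\R^n;\Lambda^{\ell+1}).
\]
These are bounded because the symbols $\ii_\xi|\xi|^{-2}$ and $\eps_\xi|\xi|^{-2}$ are homogeneous of degree $-1$ and smooth off the origin, so $|D|\,d^*|D|^{-2}$ is a Riesz-type multiplier. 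By construction $dv_-=Pw$ and $d^*v_+=P^\perp w$, with $\norm{v_\pm}_{\dot W^{n/p,p}}\lesssim\norm w_{\dot W^{n/p-1,p}}$.

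For the exact part, the hypothesis $2\le\ell\le n-2$ gives $1\le\ell-1\le n-1$, so Theorem~\ref{thm:main} applies to $v_-$ on $\Lambda^{\ell-1}$. It produces $u_-\in(\dot W^{n/p,p}\cap L^\infty)\Lambda^{\ell-1}$ with $du_-=dv_-=Pw$ and norm $\lesssim\norm w$. For the coexact part, we use the Hodge star $\star:\Lambda^{\ell+1}\to\Lambda^{n-\ell-1}$. It is a pointwise isometry, so it preserves $\dot W^{n/p,p}$ and $L^\infty$, and it intertwines $d^*$ with $\pm\star d\star$. Since $1\le n-\ell-1\le n-1$, we apply Theorem~\ref{thm:main} to $\star v_+$ and obtain a bounded $\tilde u$ with $d\tilde u=d\star v_+$. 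Then $u_+=\star^{-1}\tilde u$ satisfies $d^*u_+=d^*v_+=P^\perp w$. This gives $w=du_-+d^*u_+$ together with the estimate.

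Two further points remain. The sum is direct because $d[\cdots]\subset\operatorname{ran}P$ and $d^*[\cdots]\subset\operatorname{ran}P^\perp$ in $\dot W^{n/p-1,p}$, and $PP^\perp=0$. The induced norms are equivalent: one direction follows from the construction above, and the other from $\norm{du}_{\dot W^{n/p-1,p}}\lesssim\norm u_{\dot W^{n/p,p}}$, together with boundedness of $P$ to control each component. The reverse inclusion "$\supseteq$" holds because $d$ and $d^*$ map $\dot W^{n/p,p}$ into $\dot W^{n/p-1,p}$.

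I expect the only delicate step to be the homogeneous-distribution bookkeeping. One must check that $v_\pm$ are well-defined classes in $\dot W^{n/p,p}$ when $n/p-1$ may be negative. The clean way is to define them on the dense class $\cS_0$ and extend by continuity. One must also confirm that the identities $dv_-=Pw$ and $d^*v_+=P^\perp w$ hold in $\cS_\infty'$, which follows from the Cartan identities \eqref{eq:cartan}.
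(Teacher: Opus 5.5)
Your proposal is correct and follows the paper's proof. You split $w$ by the homogeneous Hodge decomposition $w=d(d^*|D|^{-2}w)+d^*(d|D|^{-2}w)$, apply Theorem~\ref{thm:main} in degree $\ell-1$ to the exact part and, via $d^*=\pm\star d\star$, in degree $n-\ell-1$ to the coexact part, with $2\le\ell\le n-2$ placing both degrees in $[1,n-1]$; you also supply the routine details the paper leaves implicit (boundedness of $v_\pm$, directness via $PP^\perp=0$, and the two norm inequalities).
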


\begin{proof}
The homogeneous Hodge decomposition gives
\[
 \dot W^{n/p-1,p}\Lambda^\ell
 =d[\dot W^{n/p,p}\Lambda^{\ell-1}]
 \oplus d^*[\dot W^{n/p,p}\Lambda^{\ell+1}].
\]
Apply Theorem~\ref{thm:main} to the exact term and, using
$d^*=\pm\star d\star$, to the coexact term.  The degree restrictions ensure
that both applications are in intermediate form degree, and the quotient
bounds give the norm equivalence.
\end{proof}

\section*{Statement and declaration}
The derivations in this work are assisted by the TARS system via exploratory reasoning. X. Dai supplemented critical argument details, refined the manuscript logic, and completed the writing.

\raggedbottom

\end{document}